\documentclass[a4paper,10pt]{amsart}

\usepackage[english]{babel}
\usepackage[T1]{fontenc}
\usepackage{mathrsfs}
\usepackage{amsthm}
\usepackage{amssymb}
\usepackage{amscd}

\theoremstyle{plain}
\newtheorem{thm}{\textbf{Theorem}}[section]

\newtheorem{prop}[thm]{Proposition}

\newtheorem{cor}[thm]{Corollary}

\newtheorem{lem}[thm]{Lemma}

\theoremstyle{definition}

\newtheorem{defi}[thm]{Definition}

\theoremstyle{remark}

\newtheorem{rem}[thm]{Remark}

\newtheorem{ex}[thm]{Example}


\newcommand{\ms}{\mathscr}
\newcommand{\mf}{\mathfrak}
\newcommand{\mc}{\mathcal}
\newcommand{\mb}{\mathbf}

\newcommand{\ov}{\overline}


\newcommand{\C}{\mc C}
\newcommand{\D}{\mc D}

\newcommand{\M}{\mc M}

\renewcommand{\t}{\mathsf{T}}			
\newcommand{\tu}{\t_\forall}			
\newcommand{\thu}{\t_{\forall\neg}}		



\newcommand{\crad}[1]{\sqrt[\C]{#1}}
\newcommand{\cprad}[1]{\sqrt[\C^+]{#1}}

\renewcommand{\sc}{S_\C}
\newcommand{\wt}{\mathcal W_\t}



\renewcommand{\AA}{\mathbb A}

\newcommand{\mo}{\models}
\newcommand{\onto}{\twoheadrightarrow}
\newcommand{\into}{\hookrightarrow}
\renewcommand{\iff}{\Leftrightarrow}
\newcommand{\imp}{\Rightarrow}

\renewcommand{\phi}{\varphi}

\renewcommand{\wt}{\widetilde}

\newcommand{\N}{\mathbf N}
\newcommand{\Z}{\mathbf Z}

\title{Geometrically closed rings}
\author{Jean Berthet}

\begin{document}

\begin{abstract}
We develop the basic theory of geometrically closed rings as a generalisation of algebraically closed fields, on the grounds of notions coming
from positive model theory and affine algebraic geometry. For this purpose we consider several connections between finitely presented rings and ultra\-pro\-ducts,
affine varieties and definable sets, and we introduce the key notion of an arithmetic theory as a purely algebraic version of coherent logic for rings.
\end{abstract}

\maketitle

\section*{Introduction}
 \subsection*{Arithmetics, geometry and logic}
  Algebraically closed (a.c) fields may be construed as ``arithmetically saturated'' domains, as they contain ``enough'' solutions for polynomial equations
  in one variable. The extension of this property to finitely many variables, through the geometric interpretation of Hilbert's Nullstellensatz, we
  alternatively conceive as a kind of ``geometric saturation''. Now arithmetics provide analogs to a.c closed fields, for example in the local
  fields $\mb R$ and $\mb Q_p$ and their generalisations, real closed and $p$-adically closed fields, or in situations arising from Galois theory, for instance infinite algebraic extensions of prime finite fields
  or certain extensions of number fields fixed by an absolute automorphism (see~\cite{MK}, Examples 1.4,5).
  In this kind of situation one often observes the conjunction of two related things : the existence of a ``relative'' Nullstellensatz from one hand,
  the share of a model theoretic property called ``model completeness'' on the other hand. A.c fields being the prototype of this connection, G.Cherlin
  exploited this example to derive a systematic relative Nullstellensatz in certain axiomatisable categories of ring extensions (\cite{GC}, Section III.6), which
  K.McKenna developped in \cite{MK} in a search for the determination of the ``$\t$-radical'' introduced by Cherlin, dealing with the fields case.
  As the limitation to ring extensions was imposed by classical model theory, it was our intuition that the interconnection between the two concepts was
  deeper and had to be extended to ring theory in general. Indeed, model completeness for fields has a stronger consequence called ``positive model
  completeness'', first introduced by A.MacIntyre for algebraico geometric applications of model theory in \cite{MI} and reinterpreted by
  I.Ben Yaacov and B.Poizat in ``positive model theory'' in \cite{BYP}, a (more ``algebraic'') generalisation of first order logic. Considering that
  ``coherent logic'', the categorical analogue of this last point of view, has a strong connection with algebraic geometry, we also had the feeling that an
  algebraic point of view on logic could be retained while avoiding the machinery of Grothedieck topologies.\\
  In the tradition of coherent logic (see \cite{BE1},\cite{ER} for example), the present work expresses the fruits of this meditation as the first stage of a
  program aiming at establishing algebraic connections between algebraic geometry and model theory. Our main objective here is to define a convincing
  generalisation of a.c fields in which one could develop an intrinsic ``relative affine algebraic geometry'', while a secondary goal on the course of it
  is the introduction for the algebraist of a purely algebraic form of some concepts and methods coming from logic, as they naturally appear in and prove
  to be connected to affine algebraic geometry or ring theory. In section~\ref{SEC1}, we characterise a.c fields among nontrivial rings by an algebraic
  translation of existential completeness and by Hilbert's Nullstellensatz (\ref{CAC}), and then explore the algebraic interplay between the two notions in
  a relativised context. In section~\ref{SEC2}, we translate existential completeness in the ``language'' of finitely presented rings and exploit this finiteness
  in the ultraproduct construction, where a ``finiteness property'' (\ref{FPP}) analogous to the compactness theorem of first order logic leads up to a
  ``global'' relative Nullstellensatz (\ref{POSNS}). In section~\ref{SEC3}, we introduce an algebraic version of formulas, axioms and theories of coherent
  or positive logic, which prove convenient to define certain relations and certain categories of rings coming from arithmetics, which we characterise in \ref{CARARITH}.
  In the last section~\ref{SEC4}, we introduce the ``arithmetic sets'' of a ring which may be used to characterise model complete theories (\ref{CARPMC}),
  the concept through which we eventually define geometrically closed rings, in which the ``definable sets'' of model theory have a structure property
  analogous to Chevalley's theorem on constructible sets. The rest of the introduction is devoted to the necessary background and notations. If $\C$ is a category, we will denote by $\C_0$ the collection of its objects and by $\C_1$ the collection of its
  morphisms.

  \subsection*{Affine algebraic geometry and reduced products}
  We will consider affine algebraic varieties in \emph{any ring}, even if there is no Zariski topology in general. If $X$ is any set, we will
  write $A[X]$ the polynomial ring with coefficients in $A$ and variables $X$ (and reserve small letters $x,y\ldots$, for individual variables).
  An \emph{affine algebraic set} in the affine space $A^X$ is the set $\ms Z(I)$ of zeros of an ideal of $A[X]$ and its  \emph{coordinate algebra}
  is the ring $A[X]/\ms I(\ms Z (I))$ understood with its $A$-algebra structure and embedded into the power
  $A^{\ms Z(I)}$ through the product of evaluation morphisms $(e_a)_{\ms Z(I)}:A[X]\to A^{\ms Z(I)}$. We will speak of an \emph{affine variety}
  if $X$ is finite and $I$ is finitely generated, denoting such an ideal as ``of finite type over $A$''. Affine varieties of $A$ and their regular morphisms
  are collected into the category $Aff(A)$, on which the coordinate algebra is a faithful contravariant functor. If $f:A\to B$ is an $A$-algebra, we will
  denote by $\ms Z_f(I)$ the zero set of $I$ in $B^X$ through $f$, and as usual by $\ms I(E)$ the set of polynomials in $A[X]$ vanishing on a subset $E$ of
  $B^X$.
  The geometric form of \emph{Hilbert's Nullstellensatz} states that if $k$ is an algebrically closed field, then for every ideal $I$ of
  finite type over $k$, we have $\ms I(\ms Z(I))=\sqrt I$, if $\sqrt I$ is the algebraic radical of $I$, i.e. the intersection of all prime
  ideals containing $I$.
  We will pay special attention to \emph{finitely presented (f.p) rings}, i.e. rings of the form $\Z[x_1,\ldots,x_n]/(p_1,\ldots p_m)$, and
  their (finitely cocomplete) small and full subcategory $\AA^{op}$, in which a canonical choice of tensor products will be implicit. 
  We will call a morphism $(m:P\to Q)\in \AA_1^{op}$ \emph{primitive}, and denote by $A_P$ the set $hom(P,A)$ of ring homomorphisms of $P$ into $A$,
  emphasing that we look at its elements as ``points'' of an affine space, writing $A_m : A_Q\to A_P$ the map induced by the (left exact) functor
  $hom(-,A)$ on $\AA$. We recall that the \emph{category $\ms D_A$ of elements of $A$} has elements $a\in \bigcup_{P\in \AA_0} A_P$ as objects and as
  arrows $u:( a\in A_P) \to (b\in A_Q)$ those $u:P\to Q$ such that $A_u(b)=bu=a$. The \emph{canonical diagram of $A$} is the forgetful functor
  $D_A:\ms D_A \to \AA^{op}$ and $(A,(a)_{\ms D_{A,0}})$ is a (canonical) colimit for $D_A$.
  If $(A_i)_{i\in I}$ is a family of rings and $\ms F$ is a filter of subsets of the index set $I$, $(\ms F,\supseteq)$ is a directed poset.
  As such it induces a directed diagram indexed by the products of $A_i$'s indexed themselves by \emph{elements of $\ms F$}, the morphism corresponding to an inclusion
  $S\subseteq T$ in $\ms F$ being the canonical projection $\pi^T_S : \prod_{i\in T} A_i \to \prod_{i\in S} A_i$. The (canonical) directed colimit
  is called the \emph{reduced product of the family $(A_i)_I$ modulo $\ms F$}, written $\prod_\ms F A_i$. The filter $\ms F$ is \emph{prime} if for any
  finite union of subsets of $I$ in $\ms F$, one of them is in $\ms F$, and this is equivalent to $\ms F$ being \emph{maximal}, in which case we speak of
  an \emph{ultrafilter} and an \emph{ultraproduct} for $\prod_\ms F A_i$. Notice that if one ring of the family is nontrivial, the ultraproducts of the
  family are nontrivial as well.\\
  Throughout this article, all rings will be commutative and unitary, $A$ will be one of them and $\C$ will denote a full subcategory of rings.

\section{Existentially closed noetherian rings}\label{SEC1}
  The classical model theoretic treatment of existential and model completeness deals with extensions of structures and was recently generalised
  to homomorphisms (see \cite{BYP} for instance).
  We will rely on the old vocabulary but the reader interested in model theory should keep in mind that everything here
  is ``positive'' - and algebraic. The first notion we introduce is an algebraic equivalent to the ``immersions'' of \cite{BYP},
  in order to characterise a.c fields among nontrivial rings.

  \begin{defi}
  A ring homomorphism $f:A\to B$ is \emph{(positively) existentially closed (e.c)} if for every ideal $I$ of finite type over $A$ such that
  $\ms Z_f(I)\neq\emptyset$, we have $\ms Z(I)\neq\emptyset$.\\
  $A$ is \emph{(positively) existentially closed in $\C$} if it is in $\C$ and every $A$-algebra in $\C$ is existentially closed; we note $\C^+$ the
  (full) subcategory of existentially closed rings of $\C$.
  \end{defi}

  \begin{thm}\label{CAC}
  If $\C$ is the category of nontrivial rings and $A\in\C_0$, then the following are equivalent.
  \begin{enumerate}
   \item $A$ is an algebraically closed field
   \item for every ideal $I$ in finitely many variables over $A$, $\ms I(\ms Z(I))=\sqrt I$
   \item for every ideal $I$ of finite type over $A$, $\ms I(\ms Z(I))=\sqrt I$
   \item $A$ is existentially closed in $\C$.
  \end{enumerate}
  \end{thm}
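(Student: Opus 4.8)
The plan is to prove the cyclic chain of implications $(1)\Rightarrow(2)\Rightarrow(3)\Rightarrow(4)\Rightarrow(1)$, exploiting the algebraic translation of the Nullstellensatz and the definition of existential closedness given just above.

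\textbf{The implication $(1)\Rightarrow(2)$} is exactly the classical geometric form of Hilbert's Nullstellensatz recalled in the introduction, applied without any finiteness restriction on $I$: if $A$ is an algebraically closed field, then for every ideal $I$ of $A[X]$ in any set of variables $X$ one has $\ms I(\ms Z(I))=\sqrt I$. Over a field, the classical Nullstellensatz gives this for finitely generated $I$, and a routine reduction (any polynomial involves only finitely many variables, and any element of $\sqrt I$ or of $\ms I(\ms Z(I))$ is witnessed inside a finitely generated subideal) extends it to arbitrary $X$; I would spell out just enough of this reduction to make the passage to infinitely many variables legitimate. The step $(2)\Rightarrow(3)$ is immediate, since an ideal of finite type over $A$ is in particular an ideal in finitely many variables, so $(2)$ is formally stronger than $(3)$.

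\textbf{For $(3)\Rightarrow(4)$} I would show that the algebraic Nullstellensatz forces $A$ to be existentially closed in $\C$. First note that condition $(3)$ already implies $A$ is a nontrivial ring with $\sqrt{(0)}$ behaving well; in fact taking $I=(1)$ one reads off that $A$ is reduced and has no proper radical obstructions, and one checks $A$ must be a field: if $a\in A$ is a nonzero non-unit, the ideal $(ax-1)\subseteq A[x]$ has empty zero set in $A$, so $\ms I(\ms Z(ax-1))=A[x]=\sqrt{(ax-1)}$, forcing $1\in(ax-1)$, which makes $a$ a unit, a contradiction; and it must be algebraically closed by the one-variable case. Then, given any $A$-algebra $f:A\to B$ in $\C$ and any ideal $I$ of finite type over $A$ with $\ms Z_f(I)\neq\emptyset$, I must produce a zero of $I$ in $A^X$ itself. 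The key is that $\ms Z_f(I)\neq\emptyset$ means $I$ does not contain $1$ after passing to $B$, hence $1\notin I$ in $A[X]$ either (as $f$ extends to $A[X]\to B[X]$ and a nontrivial quotient survives), so $\sqrt I\neq A[X]$; by $(3)$ this gives $\ms I(\ms Z(I))\neq A[X]$, whence $\ms Z(I)\neq\emptyset$, which is precisely the existential-closedness condition.

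\textbf{The implication $(4)\Rightarrow(1)$} is where I expect the main work. Here I start only from the abstract existential-closedness property and must recover both the field structure and algebraic closedness. To get that $A$ is a field, I would, for a nonzero $a\in A$, consider the localisation map $f:A\to A_a$ (or $A\to A/\mathfrak p$ for a suitable prime), and use that the equation $ax-1=0$ has a solution in the $A$-algebra where $a$ becomes invertible; existential closedness then pulls a solution back into $A$, making $a$ a unit, so $A$ is a field. To get algebraic closedness, I take a nonconstant $p(x)\in A[x]$ and exhibit an $A$-algebra $B$ in $\C$ in which $p$ acquires a root (e.g. a suitable nontrivial quotient of $A[x]/(p)$, whose nontriviality must be argued so that it lies in $\C$); existential closedness applied to the principal ideal $(p)$ then yields a root in $A$. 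The delicate point, and the main obstacle, is the careful bookkeeping of nontriviality: one must ensure that the auxiliary $A$-algebras constructed are genuinely in $\C$ (nontrivial) so that the hypothesis applies, and that ``existentially closed in $\C$'' (a property of $A$ relative to all its $\C$-algebras) is correctly invoked rather than mere closedness of a single morphism. Assembling these gives that $A$ is an algebraically closed field, closing the cycle.
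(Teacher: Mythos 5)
Your overall architecture (the cycle $(1)\Rightarrow(2)\Rightarrow(3)\Rightarrow(4)\Rightarrow(1)$) is legitimate and differs mildly from the paper, which proves $(1)\Rightarrow(2)\Rightarrow(3)$, then $(3)\Rightarrow(1)$ and $(3)\Leftrightarrow(4)$; your core argument for $(3)\Rightarrow(4)$ ($1\notin I$ after passing to $B$ forces $\sqrt I\neq A[X]$, hence $\ms I(\ms Z(I))\neq A[X]$, hence $\ms Z(I)\neq\emptyset$) is exactly the paper's. But there is a genuine gap in $(4)\Rightarrow(1)$, precisely at the point you flag as ``the main obstacle'' without resolving it. To show a nonzero $a$ is invertible you propose to realise $ax-1=0$ in the localisation $A_a$ (or in a quotient by a suitable prime) and pull a solution back by existential closedness. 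This fails when $a$ is a nonzero nilpotent: then $A_a$ is the trivial ring and \emph{every} prime contains $a$, so no auxiliary algebra of the kind you describe lies in $\C$. Nothing in your argument excludes nilpotents, so the field claim is not established. The missing idea --- and the paper's key device --- is to apply existential closedness to the ideal $(a)$ of $A$ itself, regarded as an ideal of finite type in the ``polynomial ring in no variables'': if $a$ lies in some maximal ideal $\mf m$, then $(a)$ has a rational point in the nontrivial $A$-algebra $A/\mf m$ (namely the unique point of $(A/\mf m)^\emptyset$), so by $(4)$ it has one in $A^\emptyset$, i.e.\ $a=0$. Thus every nonzero element avoids all maximal ideals and is a unit: one kills $a$ rather than inverting it. (If you dislike empty variable sets, the one-variable ideal $(x,a)$ serves the same purpose.) Once $A$ is a field, your treatment of algebraic closedness via $A[x]/(p)$ is fine, since nontriviality of that quotient is then automatic.

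Two smaller inaccuracies occur in the aside opening your $(3)\Rightarrow(4)$; they are harmless only because that aside is not actually needed for the implication. Taking $I=(1)$ yields the tautology $A[X]=A[X]$ and says nothing about reducedness --- for that one takes $I=(0)$ in no variables, so that $(0)=\ms I(\ms Z(0))=\sqrt{(0)}=\mathrm{nil}(A)$. And $1\in(ax-1)$ does not make $a$ a unit: since $A[x]/(ax-1)\simeq A_a$, it makes $a$ nilpotent. The same conflation of inverting with killing is what produces the gap described above.
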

  \begin{proof}
  (1)$\imp$(2) is Hilbert's Nullstellensatz, while (3) is a particular case of (2).\\
  (3)$\imp$(1) If $a\in A$, we consider the ideal $I=(a)$, of finite type in the ``polynomial ring'' $A$ in no variables. If $a\neq 0$, we have
  $\ms Z(I)=\emptyset$, whence $\sqrt I\supseteq \ms I(\ms Z(I))=A$, and $1\in I$, so $a$ is invertible : $A$ is a field. Let $f:A\into K$ be
  an embedding of $A$ into an algebraically closed field. A non constant polynomial $p(x)$ in one variable over $A$ has a rational point in $K$,
  which means that $\sqrt p\neq A[x]$, so by hypothesis we have $\ms I(\ms Z(p))\neq A[x]$, i.e. $\ms Z(p)\neq\emptyset$ and $A$ is algebraically closed.\\
  (3)$\imp$(4) Suppose that (3) is satisfied, and that $I$ is an ideal of finite type over $A$ (say in the polynomial ring $A[X]$, where $X$ is a finite
  set of variables) with no rational point in $A^X$ : this means that $\ms Z(I)=\emptyset$, so we have $\sqrt I\supseteq\ms I(\ms Z(I))=A[X]$ by hypothesis,
  which means that $1\in \sqrt I$, hence $I=A[X]$ : the ideal $I$ cannot have a rational point in a non-trivial $A$-algebra, which means that $A$ is
  existentially closed in $\C$.\\
  (4)$\imp$(3) We distinguish two cases. First, as $A$ is not trivial, we have $\ms Z(A[X])=\emptyset$, so $\ms I(\ms Z(A[X]))=A[X]=\sqrt{A[X]}$
  : this deals with the case $I=A[X]$. Secondly, if $I\neq A[X]$ then $\sqrt I\neq A[X]$, so let $q\notin \sqrt I$ : there exists a prime ideal
  $\mf p\supseteq I$ and $q\notin \mf p$. In the fraction field $F$ of $A[X]/\mf p$, the classes of elements of
  $X$ collect into a rational point $c\in F^X$ for $I$ but not for $q$, whereby we may find an inverse $d\in A$ of $q(c)$. As $A$ is e.c as a non-trivial
  ring, we may find a rational point $a$ in $A$ for the ideal generated in $A[X,y]$ by $I$ and $q(X).y-1$ : the projection of $a$
  on $A^X$ is a rational point for $I$ while not for $q$, whence $q\notin \ms I(\ms Z(I))$ and we have $\ms I(\ms Z(I))\subseteq \sqrt I$.\\
  All four properties are then equivalent if we replace equalities in (2) and (3) by inclusions; but in this case $A$ is a field, so these are equalities.
  \end{proof}

  \begin{defi}
  If $I$ is an ideal of $A$, we will say that $I$ is
   \begin{enumerate}
    \item \emph{$\C$-prime} if $A/I$ embeds into a ring of $\C$
    \item \emph{$\C$-radical}, if $I=\crad I$, where $\crad I$ is the \emph{$\C$-radical of $I$}, i.e. the intersection of all
    $\C$-primes containing $I$.
   \end{enumerate}
  \end{defi}

  If $A$ is in $\C$ and $I$ is an ideal of $A[X]$, the rings $A[X]/\ms I(\ms Z(I))$ and $A[X]/\crad I$ lie in a category attached to $\C$ and strongly connected
  to the relativised Hilbert's property. We introduce them here as in \cite{HD} (Section 8.1) as a generalisation of \emph{quasivarieties}, well known to universal algebraists.
  \begin{defi}
  A full subcategory of rings $\mc S$ is \emph{special} if it is closed under products and embedded subrings.
  \end{defi}

  \begin{lem}
  The full subcategory $\sc$ of rings embeddable into a product of rings of $\C$ is the smallest special category containing $\C$. 
  \end{lem}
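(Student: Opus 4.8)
The plan is to establish three facts: that $\sc$ contains $\C$, that $\sc$ is special (closed under products and embedded subrings), and that any special category containing $\C$ already contains $\sc$. Throughout, ``embeds'' means ``admits an injective ring homomorphism into'', and the whole argument rests on two elementary observations: a composite of injective homomorphisms is injective, and a product $\prod_j f_j$ of injective homomorphisms is injective.

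First I would note $\C_0\subseteq\sc_0$, since any $A\in\C_0$ embeds into the one-factor product via $\mathrm{id}_A:A\to A$. For closure under embedded subrings, suppose $A\in\sc_0$ with an embedding $A\into\prod_{i\in I}C_i$, $C_i\in\C_0$, and let $B\into A$; composing the two embeddings realises $B$ as a subring of a product of rings of $\C$, so $B\in\sc_0$.

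For closure under products, take a family $(A_j)_{j\in J}$ in $\sc_0$, each with an embedding $A_j\into\prod_{i\in I_j}C_{j,i}$ where $C_{j,i}\in\C_0$. The product of these maps is an embedding $\prod_{j}A_j\into\prod_{j}\prod_{i\in I_j}C_{j,i}$, and the target is canonically isomorphic to the single product $\prod_{(j,i)}C_{j,i}$ indexed by the disjoint union $\bigsqcup_j I_j$, which is itself a product of rings of $\C$. Composing with this isomorphism gives a direct embedding of $\prod_j A_j$ into a product of rings of $\C$, so $\prod_j A_j\in\sc_0$ and $\sc$ is special.

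Finally, for minimality, let $\mc S$ be any special full subcategory with $\C\subseteq\mc S$ and let $A\in\sc_0$, say $A\into\prod_{i}C_i$ with $C_i\in\C_0\subseteq\mc S_0$. Closure of $\mc S$ under products gives $\prod_i C_i\in\mc S_0$, and then closure under embedded subrings gives $A\in\mc S_0$; hence $\sc\subseteq\mc S$. I do not expect any genuine obstacle here: the statement is a formal manipulation of embeddings and index sets. The only point requiring a little care is the regrouping of a product of products as a single product over the disjoint union of index sets, together with the convention on the empty product (the trivial ring), which the closure conditions handle uniformly.
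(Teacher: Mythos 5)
Your proof is correct and follows essentially the same route as the paper's: closure under embedded subrings via composition of embeddings, closure under products via the product of the given embeddings, and minimality because any special category containing $\C$ must contain all subrings of products of its rings. You are slightly more explicit than the paper on two points it leaves tacit (the regrouping of $\prod_j\prod_{i\in I_j}C_{j,i}$ as a single product over the disjoint union, and the inclusion $\C\subseteq\sc$ via the identity embedding), which is fine.
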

  \begin{proof}
  A smallest special category containing $\C$ clearly exists and contains $\sc$, so we only need to show that this last one is special. Call a
  ring \emph{$\C$-special} if it is in $\sc$. Any ring embedded into a $\C$-special ring is clearly $\C$-special, so let $(A_i)_I$ be a family
  of $\C$-special rings. For each $i$ we have by definition an embedding $f_i:A_i\into \prod_{J_i} B_j$, where $(B_j)_{j\in J_i}$ is a family
  of rings of $\C$. The $f_i$'s collect into a embedding $f:\prod_I A_i \into \prod_I \prod_{J_i} B_j$, whence $\prod_i A_i$ is in $\sc$, which
  is then closed under products and embedded subrings, i.e. is special.
  \end{proof}

  \begin{prop}\label{RAD}
  An ideal $I$ of $A$ is $\C$-radical if and only if $A/I$ is in $\sc$.
  \end{prop}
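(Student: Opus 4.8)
The plan is to exploit the dictionary between ring homomorphisms $A \to B$ with $B \in \C_0$ and the $\C$-prime ideals of $A$: the kernel of such a homomorphism is $\C$-prime, since by the first isomorphism theorem $A/\ker$ embeds into $B$ as its image, and conversely every $\C$-prime arises this way from the quotient followed by a defining embedding. Under this correspondence a product $\prod_k B_k$ of rings of $\C$ gives rise to a family $(J_k)_k$ of $\C$-primes, the induced map $A \to \prod_k B_k$ has kernel $\bigcap_k J_k$, and this map is injective on $A/I$ precisely when $\bigcap_k J_k = I$. Both implications then follow by reading this equivalence in opposite directions. I note at the outset that $I \subseteq \crad I$ always holds, since $I$ is contained in every $\C$-prime containing it, so only the reverse inclusions are at issue.

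For the forward direction I would assume $A/I$ is in $\sc$, so that there is an embedding $g : A/I \into \prod_k B_k$ with each $B_k \in \C_0$. Composing $g$ with the quotient $A \onto A/I$ and with the $k$-th projection yields homomorphisms $h_k : A \to B_k$ whose kernels $J_k$ are $\C$-primes containing $I$. It then suffices to verify $\bigcap_k J_k = I$: an element $a$ lies in every $J_k$ iff all the $h_k(a)$ vanish, iff $g$ sends the class of $a$ to $0$, iff $a \in I$ by injectivity of $g$. Hence $I \subseteq \crad I \subseteq \bigcap_k J_k = I$, so $I$ is $\C$-radical.

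For the converse I would assume $I = \crad I$ and let $J$ range over the collection of all $\C$-primes of $A$ containing $I$, which is a genuine set since its members are ideals of $A$. For each such $J$ I choose, by definition of $\C$-prime, a ring $B_J \in \C_0$ together with an embedding $A/J \into B_J$, and write $h_J : A \to B_J$ for its composite with the quotient map, so that $\ker h_J = J$. Assembling the $h_J$ into a single homomorphism $h : A \to \prod_J B_J$, its kernel is $\bigcap_J J = \crad I = I$, whence $h$ factors as an embedding $A/I \into \prod_J B_J$ into a product of rings of $\C$. Therefore $A/I$ is in $\sc$.

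The argument is essentially formal once the correspondence above is in place, and the only step that will require care is the bookkeeping that the components of a map into a product have kernels whose intersection equals the kernel of the whole map, so that injectivity on $A/I$ is equivalent to that intersection being exactly $I$. The hard part—really a mild point—will be recognising that this intersection coincides with the $\C$-radical in each case, which hinges on the injectivity of the given embedding in the first direction and on the hypothesis $I = \crad I$ in the second.
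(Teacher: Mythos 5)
Your proof is correct and follows essentially the same route as the paper's: both directions rest on the correspondence between $\C$-primes containing $I$ and the kernels of the component maps $A\to B_k$ of an embedding of $A/I$ into a product of rings of $\C$. The only cosmetic difference is that in showing $\C$-radical implies $A/I\in\sc$ you embed each $A/J$ directly into a chosen $B_J\in\C_0$ and land in $\prod_J B_J$, whereas the paper first maps into $\prod_{\mf p} A/\mf p$ and lets the special-category lemma absorb the remaining step; the content is identical.
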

  \begin{proof}
  Consider the \emph{representation} of $A/I$ relatively to $\C$, which is by analogy with the case of prime ideals, the natural product homomorphism
  $f:A/I\to \prod_{\ms P_I} A/\mf p$, where $\ms P_I$ is the set of all $\C$-prime ideals containing $I$ : if $\pi_I$ denotes the canonical map
  $A\onto A/I$, we have $\crad I= Ker(f\pi_I)$. This means that if $I$ is $\C$-radical, then $A/I$ embeds into $\prod_{\ms P_I} A/\mf p$, which locates
  $A/I$ in $\sc$ by definition of $\C$-special rings.\\
  Conversely, if $A/I$ is in $\sc$, by the preceding lemma there exists an embedding $f:A/I\into \prod_J B_j$, where $B_j$ is a family of rings of $\C$. Denoting
  by $\mf p_j$ the kernel of the composite morphism $A\to A/I \to \prod_J B_j \to B_j$ obtained with the canonical projections, each $\mf p_j$ is a
  $\C$-prime containing $I$, whence $\crad I\subseteq \bigcap_J \mf p_j=I$ and $I$ is $\C$-radical.
  \end{proof}

  \begin{defi}
  We will say that $A$ is \emph{geometrically closed (g.c) in $\C$} if it is in $\C$ and satisfies the geometric form of Hilbert's Nullstellensatz
  relatively to $\C$, in other words if for every ideal $I$ of finite type over $A$, we have $\ms I(\ms Z(I))=\crad I$.
  \end{defi}
   In general, if $X$ is a finite set and $I\leq A[X]$ a finitely generated ideal, the coordinate algebra of the affine variety $V=\ms Z(I)$ has codomain
   $A[V]=A[X]/\ms I(\ms Z(I))$, which lies in $\sc$. If $A$ is noetherian, $A[V]$ is finitely presented as an $A$-algebra, so the coordinate
   algebra functor $A[-]:Aff(A)^{op}\to \sc$ has image in the category of finitely presented $\C$-special $A$-algebras. Now if in addition $A$ is geometrically
   closed in $\C$, by \ref{RAD} for every such algebra $f:A\to B$ there is a finite $X$ and a finitely generated $\C$-radical ideal $I$ such that
   $B\simeq A[X] /I=A[X]/\ms I(\ms Z(I))$, so $A[-]$ is a duality between affine varieties of $A$ and finitely presented $\C$-special $A$-algebras. We will
   now focus for a while on noetherian rings, after the following generalisation of Proposition 1.9 (e) of \cite{RH}.

  \begin{lem}\label{ZTOP}
  If $A$ is an integral domain and $E$ is a subset of $A^X$, then the Zariski closure of $E$ is the affine algebraic set $\ms Z(\ms I(E))$.
  \end{lem}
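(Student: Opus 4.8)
The plan is first to make the statement meaningful, since ``Zariski closure'' presupposes that the affine algebraic sets $\{\ms Z(J) : J\leq A[X]\}$ form the closed sets of a topology on $A^X$; then to pin down the closure of $E$ by the standard Galois correspondence between subsets of $A^X$ and ideals of $A[X]$.

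For the topology I would verify the closure axioms. The extremes $\emptyset=\ms Z(A[X])$ and $A^X=\ms Z(0)$ are algebraic, and an arbitrary intersection satisfies $\bigcap_\alpha \ms Z(J_\alpha)=\ms Z(\sum_\alpha J_\alpha)$ because a point annihilates every $J_\alpha$ exactly when it annihilates the ideal they generate; neither fact uses any hypothesis on $A$. The one delicate axiom is stability under finite unions, and this is the single place where integrality is needed: I claim $\ms Z(J_1)\cup\ms Z(J_2)=\ms Z(J_1 J_2)$. The inclusion $\subseteq$ is immediate, as a point killing all of $J_1$ (or all of $J_2$) kills every product $p_1 p_2$ with $p_i\in J_i$, hence all of $J_1 J_2$. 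For $\supseteq$, take a point $a$ lying in neither $\ms Z(J_1)$ nor $\ms Z(J_2)$ and choose $p_i\in J_i$ with $p_i(a)\neq 0$; then $p_1 p_2\in J_1 J_2$ and $(p_1 p_2)(a)=p_1(a)\,p_2(a)\neq 0$ because $A$ is a domain, so $a\notin\ms Z(J_1 J_2)$. I expect this finite-union step to be the only real obstacle, precisely because it is where the integral-domain assumption is consumed; over a ring with zero divisors the equality fails.

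With the topology established, the remainder is formal. The set $\ms Z(\ms I(E))$ is closed by construction, and it contains $E$ since every polynomial of $\ms I(E)$ vanishes on $E$ by definition. To see it is the least closed set with this property, let $\ms Z(J)$ be any algebraic set containing $E$; then each element of $J$ vanishes on all of $E$, so $J\subseteq\ms I(E)$, and applying the inclusion-reversing operator $\ms Z$ yields $\ms Z(\ms I(E))\subseteq\ms Z(J)$. Thus $\ms Z(\ms I(E))$ sits inside every closed set containing $E$, so it is exactly the Zariski closure of $E$.
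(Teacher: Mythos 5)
Your proof is correct and its core argument --- $\ms Z(\ms I(E))$ is closed, contains $E$, and lies inside every $\ms Z(J)\supseteq E$ because $J\subseteq\ms I(E)$ and $\ms Z$ reverses inclusions --- is exactly the paper's proof. The only difference is that you additionally verify the closure axioms for the Zariski topology (in particular $\ms Z(J_1)\cup\ms Z(J_2)=\ms Z(J_1J_2)$, the one step consuming the integral-domain hypothesis), which the paper takes for granted; this is a reasonable and harmless addition given that the introduction warns the Zariski topology need not exist over an arbitrary ring.
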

  \begin{proof}
  The set $\ms Z(\ms I(E))$ is closed and contains $E$. If now $F=\ms Z(I)\supseteq E$ is any closed subset of $A^X$ containing $E$, then $I\subseteq \ms I(E)$
  , whence $F=\ms Z(I)\supseteq \ms Z(\ms I(E))$. It follows that $\ms Z(\ms I(E))$ is the smallest closed set containing $E$.
  \end{proof}

  \begin{prop}\label{VARIRR}
  Suppose that $\C$ is a category of integral domains and $A$ is geometrically closed in $\C$ and artinian. If $V$ is an affine variety of $A$ in finite
  dimension, then $V$ is irreducible if and only if $\ms I(V)$ is $\C$-prime.
  \end{prop}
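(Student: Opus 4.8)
The plan is to reduce the statement to the classical equivalence ``$V$ irreducible $\iff\ms I(V)$ prime'', and then, under geometric closedness, to identify primeness of $\ms I(V)$ with $\C$-primeness. First I would note that since $A$ is an artinian integral domain it is a field $k$, so that $A[X]$ is noetherian and, for $f,g\in A[X]$, one has $\ms Z(fg)=\ms Z(f)\cup\ms Z(g)$ in $A^X$. Describing the Zariski closed subsets of $A^X$ as the sets $\ms Z(J)$ with $\ms I$ as an inverse (Lemma~\ref{ZTOP}), the usual argument then gives the equivalence: if $fg\in\ms I(V)$ then $V\subseteq\ms Z(f)\cup\ms Z(g)$ and irreducibility forces $f$ or $g$ into $\ms I(V)$, while conversely a proper decomposition $V=V_1\cup V_2$ yields $f_i\in\ms I(V_i)\setminus\ms I(V)$ with $f_1f_2\in\ms I(V)$.

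The easy implication is then immediate: if $\ms I(V)$ is $\C$-prime, the coordinate algebra $A[V]=A[X]/\ms I(V)$ embeds into an integral domain of $\C$, hence is itself a domain, so $\ms I(V)$ is prime and $V$ is irreducible. For the converse I would start from $V$ irreducible, so that $\ms I(V)$ is prime, and bring in geometric closedness: writing $V=\ms Z(I)$ with $I$ of finite type, the hypothesis gives $\ms I(V)=\ms I(\ms Z(I))=\crad{I}$, so that $\ms I(V)$ is $\C$-radical and, by Proposition~\ref{RAD}, $A[V]\in\sc$. Thus there is an embedding $A[V]\into\prod_{j\in J}D_j$ with each $D_j\in\C$ a domain; the kernels $\mf p_j$ of the projections $A[V]\to D_j$ are prime ideals with $\bigcap_{j}\mf p_j=0$, exhibiting $(0)$ as an intersection of $\C$-primes of the domain $A[V]$.

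It remains to pass from this intersection to a single $\C$-prime, and this is the heart of the matter. I do not expect a finite subfamily of the $\mf p_j$ to intersect to $0$ already — even in $k[x]$ one has $(0)=\bigcap_f(f)$ over the infinitely many irreducible $f$ — so the extraction cannot be purely finitary and must be carried out by a limit argument. Here primeness of $(0)$ in $A[V]$ is exactly what is needed: for nonzero $r_1,\dots,r_m\in A[V]$ the product $r_1\cdots r_m$ is nonzero, hence lies outside some $\mf p_j$, and primeness of that $\mf p_j$ forces every $r_i\notin\mf p_j$; thus the sets $U_r=\{\,j: r\notin\mf p_j\,\}$ for $r\neq0$ have the finite intersection property and extend to an ultrafilter $\mathcal U$ on $J$. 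The induced map $A[V]\to\prod_{\mathcal U}D_j$ is then injective, and the ultraproduct $\prod_{\mathcal U}D_j$ is again an integral domain. The single remaining — and principal — difficulty is to know that this ultraproduct is, or embeds into, a ring of $\C$, so that $\ms I(V)$ is genuinely $\C$-prime and not merely prime; I would secure this from the stability of $\C$ under ultraproducts, that is, from the finiteness property~\ref{FPP} which plays here the role of the compactness theorem.
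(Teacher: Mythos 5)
Your easy direction is sound and essentially the paper's: $\C$-primeness of $\ms I(V)$ gives an embedding of $A[V]$ into a domain of $\C$, hence primeness of $\ms I(V)$, hence irreducibility via the closure operator of Lemma~\ref{ZTOP}. The hard direction, however, stalls exactly where you say it does, and the repair you propose is not available. Nothing in the hypotheses of Proposition~\ref{VARIRR} makes $\C$ closed under ultraproducts: $\C$ is only assumed to be a full subcategory of integral domains, and closure under ultraproducts enters the paper only from Section~2 onwards as an \emph{additional} hypothesis. Theorem~\ref{FPP} does not supply it either --- it produces homomorphisms \emph{into} ultraproducts of rings of $\C$, it never places such an ultraproduct back inside $\C$. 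So your ultrafilter extraction leaves you with an embedding $A[V]\into\prod_{\ms U}D_j$ into a domain that has no reason to lie in, or embed into a member of, $\C$; you have shown that $\ms I(V)$ is prime and $\C$-radical, not that it is $\C$-prime, and that implication genuinely fails without further input (take $\C$ to consist of the single field $\ov{\mb Q}$: the $\C$-primes of $\ov{\mb Q}[x]$ are the maximal ideals $(x-a)$, and $(0)$ is prime and equal to their intersection, yet $\ov{\mb Q}[x]$ embeds into no member of $\C$).

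The hypotheses your argument never exploits --- artinianity beyond ``$A$ is a field'', and irreducibility beyond ``$\ms I(V)$ is prime'' --- are exactly the ones the paper's proof leans on, and it does so on the side of zero sets rather than ideals. Writing $V=\ms Z(I)$, every point of $V$ has a $\C$-prime vanishing ideal (its coordinate ring embeds into $A\in\C$), so $V=\bigcup_{\mf p}\ms Z(\mf p)$ over the $\C$-primes $\mf p\supseteq I$; the artinian hypothesis is invoked to extract a \emph{finite} subcover $V=\bigcup_{k=1}^m\ms Z(\mf p_k)$; irreducibility then forces $V=\ms Z(\mf p)$ for a single $\C$-prime $\mf p$; and geometric closedness (applicable because $A[X]$ is noetherian, so $\mf p$ is of finite type) yields $\ms I(V)=\ms I(\ms Z(\mf p))=\crad{\mf p}=\mf p$. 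The entire content of the artinian hypothesis is this finite-cover step, which singles out \emph{one} $\C$-prime; your ultrafilter replaces it by a limit of $\C$-primes, and that limit is precisely what cannot be controlled from the stated hypotheses. To salvage your ideal-theoretic formulation you would have to either add closure of $\C$ under ultraproducts to the hypotheses or reinstate a finiteness argument of this kind.
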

  \begin{proof}

  If $V=\ms Z(I)$ is irreducible, consider that $V$ is the union of the $\ms Z(\mf p)$'s, for all $\C$-prime ideals $\mf p$ containing $I$. As $A$
  is artinian, finitely many of them $\mf p_1,\ldots,\mf p_m$ suffice, so we have $V=\bigcup_{k=1}^m \ms Z(\mf p_k)$. Now as $V$ is irreducible, in fact
  one of them we call $\mf p$ suffices and $V=\ms Z(\mf p)$, whence $\ms I(V)=\ms I(\ms Z(\mf p))=\crad{\mf p}$ (by geometric closedness) $=\mf p$
  (as $\mf p$ is $\C$-radical), so $\ms I(V)$ is $\C$-prime.\\
  The converse needn't the hypothesis of geometric closedness. If $I=\ms I(V)$ is $\C$-prime and $V=F_1\cup F_2$ (where $F_i=\ms Z(J_i), i=1,2$), we
  have $V=\ms Z(J_1J_2)$ (so $I=\ms I(\ms Z(J_1J_2))$), because $A$ is a domain. Now by definition of $\C$-primality, $A[X]/I$ embeds into a
  ring $B$ of $\C$, so $I$ is prime. As $J_1J_2\subseteq I$, one of the $J_i$'s, say $J_1$, is then included in $I$ and we have 
  $F_1\supseteq \ms Z(\ms I(V))=\overline V$ (by the lemma) $=V$ : $V$ is irreducible.
  \end{proof}

  The category of rings embeddable into a ring of $\C$ we will note $U_\C$ and call the \emph{universal category} associated to $\C$. If $\C$ is a
  category of fields, the proposition entails that the function fields of irreducible varieties of the fields in $\C$ are also in $U_\C$. In general
  the categories $\C$ and $U_\C$ obviously generate the same special category, so geometric closedness relative to $U_\C$ is the same as relative to $\C$ or
  even to $\sc$, provided that the rings are in the suitable categories. Rings in $U_\C$ and nontrivial rings in $\sc$ have in common to be in the category $\C^-$ of rings $B$
  which \emph{continue} into a ring of $\C$, i.e. for which there exists an homomorphism $f:B\to C$, with $C$ in $\C$. In the light of Theorem~\ref{CAC},
  if $\C$ is the category of a.c fields, $\C^-$ is the category of nontrivial rings. We will replace $\C$ by any category of fields after the following
  example.
  
  \begin{ex}
  Remember that a field $k$ is real if and only if $-1$ is not a sum of squares in $k$, which is equivalent to there existing a total
  order on $k$, compatible with the ring structure. An ideal $I$ of $A$ is real prime if $A/I$ embeds into a real field and the real radical $\sqrt[R]{I}$ of $I$ is the
  intersection of real primes containing $I$ (see \cite{BCR}, chapters 1 and 4).
  These are the relativised notions we introduced for the category $\C$ of real fields and $\C^-$ is the category of what we call a \emph{pre-real}
  ring, i.e where $-1$ is not a sum of squares, or equivalently where there exists a real prime ideal. We also recall that a real field $k$ is \emph{real closed}
  if it has no proprer real algebraic extension, which is characterised by two properties : every element of $k$ or its opposite is a square in $k$,
  and every polynomial equation in one variable with coefficients in $k$ and odd degree has a root in $k$. In this situation,
  $U_\C$ is the category of what we name \emph{real integral domains}, i.e domains in which every zero sum of squares is trivial.
  \end{ex}

  \begin{thm}\label{ECNOET}
  If $\C$ is a category of fields and $A$ is an existentially closed ring of $\C^-$, then $A$ is a geometrically closed field in $U_\C$.
  \end{thm}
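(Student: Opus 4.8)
The plan is to transpose, to the relativised setting, the arguments (4)$\imp$(3) and (3)$\imp$(1) of Theorem~\ref{CAC}. Three things must be established: that $A$ embeds into a field of $\C$ (so that $A\in U_\C$ and $A$ is a domain), that $A$ is in fact a field, and that the relative Nullstellensatz $\ms I(\ms Z(I))=\sqrt[U_\C]{I}$ holds for every ideal $I$ of finite type over $A$. By the remark preceding the statement, $\C$ and $U_\C$ generate the same special category, so their notions of prime ideal coincide on every $A[X]$ and the $U_\C$-radical agrees with $\crad I$; it therefore suffices to prove $\ms I(\ms Z(I))=\crad I$.

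First I would locate $A$ in $U_\C$. Since $A\in\C^-$ there is a homomorphism $g:A\to C$ with $C\in\C$; put $\mf p=\ker g$, so that $A/\mf p\into C$ and hence $A/\mf p\in U_\C\subseteq\C^-$. Viewing $A/\mf p$ as an $A$-algebra through the projection $\pi$, existential closedness of $A$ in $\C^-$ makes $\pi$ an e.c.\ homomorphism. Now if some $b\in\mf p$ were nonzero, the principal ideal $(b)$ of the polynomial ring $A$ in no variables would satisfy $\ms Z((b))=\emptyset$ while $\ms Z_\pi((b))\neq\emptyset$ (the unique point of the empty affine space, since $\pi(b)=0$), contradicting that $\pi$ is e.c. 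Hence $\mf p=0$, $A\into C$, and $A\in U_\C$. To upgrade this to ``$A$ is a field'', take $a\neq0$ in $A$: then $a$ is invertible in $C$, the localisation $A_a=A[x]/(ax-1)$ still embeds into $C$, so $A_a\in U_\C\subseteq\C^-$, and the localisation map $\ell$ is e.c. Since $ax-1$ has the zero $1/a$ in $A_a$, i.e.\ $\ms Z_\ell(ax-1)\neq\emptyset$, we obtain $\ms Z(ax-1)\neq\emptyset$ in $A$; thus $a$ is invertible and $A$ is a field.

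It remains to prove $\ms I(\ms Z(I))=\crad I$ for $I$ of finite type over $A$. For the inclusion $\crad I\subseteq\ms I(\ms Z(I))$ I would note that every rational point $c\in\ms Z(I)$ yields an evaluation morphism $e_c:A[X]\to A$ whose kernel contains $I$ and whose quotient embeds into $A\into C$, so that $\ker e_c$ is $\C$-prime; intersecting these kernels gives exactly $\ms I(\ms Z(I))$, which therefore contains $\crad I$. The reverse inclusion is the heart of the matter. Given $q\notin\crad I$, choose a $\C$-prime $\mf q\supseteq I$ with $q\notin\mf q$ and an embedding $A[X]/\mf q\into K$ into a field $K\in\C$; composing with $A\to A[X]$ produces a structure map $h:A\to K$ which, because $K\in\C\subseteq\C^-$, is e.c. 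The images of the variables give a point of $\ms Z_h(I)$ at which $q$ does not vanish, so introducing a fresh variable $y$ and the ideal $J=(I,\,qy-1)$ over $A$, this point extends (taking $y$ to be the inverse of the nonzero value of $q$) to a point of $\ms Z_h(J)$, whence $\ms Z_h(J)\neq\emptyset$. Existential closedness of $h$ then transfers this to $\ms Z(J)\neq\emptyset$ in $A$, and any such point projects to a $c\in\ms Z(I)$ with $q(c)$ invertible, so $q\notin\ms I(\ms Z(I))$.

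The main obstacle — and the step where positivity and existential closedness really do the work — is this last transfer: producing a genuine rational point of $I$ in $A$ (rather than merely in a field of $\C$, or in a product as $\sc$ would give) at which $q$ does not vanish. The Rabinowitsch variable reduces ``$q$ nonzero'' to ``$qy-1$ solvable'', converting a non-vanishing condition into a positive solvability statement to which the e.c.\ hypothesis applies, exactly as in (4)$\imp$(3) of Theorem~\ref{CAC}. The only genuinely new ingredients are the preliminary location of $A$ inside a single field of $\C$, which is what makes the evaluation kernels $\C$-prime in the easy inclusion, and the identification of the $U_\C$-radical with $\crad I$.
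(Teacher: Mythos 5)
Your proof is correct and follows essentially the same route as the paper: compose $A$ through a $\C$-prime quotient $A[X]/\mf q$ into a field of $\C$, deduce that $A$ is a field as in Theorem~\ref{CAC}, and use the Rabinowitsch variable together with existential closedness of that composite to pull a point of $\ms Z(I)\setminus\ms Z(q)$ back into $A$. The only (harmless) variation is that you apply the e.c.\ hypothesis to the finite-type ideal $(I,qy-1)$ rather than to $(\mf q, qy-1)$, which lets you skip the paper's appeal to noetherianity of $A[X]$ to see that $\mf q$ is finitely generated.
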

  \begin{proof}
  Let $I$ be an ideal of finite type over $A$ (say of $A[X]$ for $X$ finite), where $A$ is existentially closed in $\C^-$ : we want to show that
  $\ms I(\ms Z(I))\subseteq \crad I$ so let $q\notin \crad I$. There exists a $\C$-prime ideal $\mf p$ of $A[X]$ containing $I$ but not $q$,
  and we consider the composite morphism $f:A\into A[X] \onto A[X]/\mf p \into B$, with $B$ in $\C$ by hypothesis on $\mf p$.
  As $A$ is existentially closed in $\C^-$, $f$ itself is e.c and as $B$ is a field,
  as in the proof of Theorem~\ref{CAC} $A$ itself is a field. The ideal $\mf p$ is then finitely generated and there exists a rational point for $\mf p$ in $A$
  as in~\ref{CAC}, which is a zero for $I$ but not for $q$. We then have $q\notin\ms I(\ms Z(I))$, and the reverse inclusion comes from the fact that $A$ is in $U_\C$.
  \end{proof}
 
  This tells us where to find g.c noetherian rings in certain cases, but in contrast with geometric closedness, if $A$ is existentially closed in $\C$
  and $\C$ contains a trivial ring $\mb 1$, then the unique homomorphism $A\to \mb 1$ is e.c, hence  $A$ is trivial. This means that working with
  ``interesting'' e.c rings somehow involves avoiding trivial rings. In this case, we get a stronger connection between the two notions, as
  Theorem~\ref{CAC} illustrates in the ``absolute'' case. This involves a ``local'' definition of geometric closedness. We note $\C^*$ the full subcategory
  of nontrivial rings of $\C$. 
  \begin{defi}
  An homomorphism $f:A\to B$ is \emph{geometrically closed} if for every ideal $I$ of finite type over $A$, $\ms I(\ms Z(I))\subseteq\ms I(\ms Z_f(I))$.
  \end{defi}
 
  \begin{lem}
  $A$ is geometrically closed in $\C$ if and only if every $A$-algebra in $\C$ is geometrically closed.
  \end{lem}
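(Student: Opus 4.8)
The plan is to reduce the statement to a single identity that re-expresses the $\C$-radical as an intersection of vanishing ideals ranging over all $\C$-valued points of the variety. Fixing a finite set of variables $X$ and an ideal $I$ of finite type over $A$, I would aim to prove
\[
\crad{I}\;=\;\bigcap_{f:A\to B,\ B\in\C_0}\izi{f}{I},
\]
the intersection running over all $A$-algebras $f$ whose codomain lies in $\C$. Granting this, both directions follow formally. If $A$ is geometrically closed in $\C$, then $\iz{I}=\crad{I}$, and since each $\izi{f}{I}$ is a term of the intersection we get $\iz{I}=\crad{I}\subseteq\izi{f}{I}$; that is, every $A$-algebra into $\C$ is a geometrically closed homomorphism. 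Conversely, if every such $f$ satisfies $\iz{I}\subseteq\izi{f}{I}$, then $\iz{I}\subseteq\bigcap_f\izi{f}{I}=\crad{I}$, which together with the reverse inclusion (obtained below as the case $f=\mathrm{id}_A$) gives $\iz{I}=\crad{I}$, so $A$ is geometrically closed in $\C$.

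For the easy inclusion $\crad{I}\subseteq\izi{f}{I}$ I would use the dictionary between points and prime ideals. Given an $A$-algebra $f:A\to B$ with $B\in\C_0$ and a point $b\in\zi{f}{I}$, the universal property of $A[X]$ produces a homomorphism $h:A[X]\to B$ extending $f$ and sending the variables to the coordinates of $b$; its kernel $\mf p=\ker h$ is a $\C$-prime (since $A[X]/\mf p\into B$) containing $I$ (since $b$ is a zero of $I$). Any $q\in\crad{I}$ then lies in $\mf p$, hence vanishes at $b$; as $b$ was arbitrary, $q\in\izi{f}{I}$. Specialising to $f=\mathrm{id}_A$---legitimate because $A\in\C$---and using $\zi{\mathrm{id}_A}{I}=\z{I}$ yields $\crad{I}\subseteq\iz{I}$, the inclusion invoked above.

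The only step with genuine content is the reverse inclusion $\bigcap_f\izi{f}{I}\subseteq\crad{I}$, which I would prove contrapositively: this is where the notion of $\C$-primality must manufacture an actual $\C$-valued point witnessing a non-radical polynomial. If $q\notin\crad{I}$, then by definition of the $\C$-radical some $\C$-prime $\mf p\supseteq I$ omits $q$; an embedding $A[X]/\mf p\into B$ into a ring of $\C$ composes to a homomorphism $A[X]\to B$, whose restriction is an $A$-algebra $f:A\to B$ in $\C$, and the images of the variables give a point $b\in\zi{f}{I}$ at which $q$ does not vanish, precisely because $q\notin\mf p$. Hence $q\notin\izi{f}{I}$, so $q\notin\bigcap_f\izi{f}{I}$. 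Once the identity is established the lemma is purely formal; I would keep $A\in\C$ as a standing hypothesis throughout, since it is already built into the notion of being geometrically closed in $\C$.
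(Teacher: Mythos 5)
Your proposal is correct and takes essentially the same approach as the paper: the paper's entire proof consists of asserting, without further detail, exactly the identity $\crad I=\bigcap_f \ms I(\ms Z_f(I))$ that you state and then verify via the dictionary between $\C$-valued points and $\C$-prime ideals. The formal deduction of both directions from it, including the use of $f=\mathrm{id}_A$ for the reverse inclusion, is precisely what the paper leaves implicit.
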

  \begin{proof}
  If $I$ is an ideal of finite type over $A$, say $I\leq A[X]$, a polynomial $p\in A[X]$ is in $\crad I$ if and only it is in $\ms I(\ms Z_f(I))$ for
  every homomorphism $f:A\to B$ in $\C$.
  \end{proof}

  \begin{lem}\label{IMMGEO}
  A geometrically closed homomorphism into a nontrivial ring is existentially closed.
  \end{lem}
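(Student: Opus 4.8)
The plan is to unwind the definitions and show that geometric closedness of the homomorphism forces the existential‑closedness condition. Recall what must be proved: if $f:A\to B$ is geometrically closed and $B$ is nontrivial, then for every ideal $I$ of finite type over $A$ with $\ms Z_f(I)\neq\emptyset$ we must conclude $\ms Z(I)\neq\emptyset$. The natural strategy is contrapositive at the level of zero sets: assume $\ms Z(I)=\emptyset$ and derive $\ms Z_f(I)=\emptyset$, which contradicts the hypothesis. The connecting device is the identity $\ms I(\ms Z(I))\subseteq\ms I(\ms Z_f(I))$ guaranteed by the definition of a geometrically closed homomorphism.

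First I would observe that $\ms Z(I)=\emptyset$ means every polynomial of $A[X]$ vanishes vacuously on the empty set, so $\ms I(\ms Z(I))=A[X]$; in particular $1\in\ms I(\ms Z(I))$. By the definition of $f$ being geometrically closed, this gives $1\in\ms I(\ms Z_f(I))$, i.e. the constant polynomial $1$ vanishes on every point of $\ms Z_f(I)$. Evaluating $1$ at any putative point $b\in\ms Z_f(I)\subseteq B^X$ yields $1=0$ in $B$, which is impossible because $B$ is nontrivial. Hence $\ms Z_f(I)=\emptyset$. Contrapositively, $\ms Z_f(I)\neq\emptyset$ forces $\ms Z(I)\neq\emptyset$, which is exactly the condition defining $f$ as existentially closed.

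I do not expect any serious obstacle here; the lemma is essentially a bookkeeping consequence of the fact that the unit ideal has empty zero set in any nontrivial ring, combined with the inclusion built into the definition of geometric closedness for homomorphisms. The only point demanding minor care is the nontriviality hypothesis: it is precisely what prevents $1=0$ in $B$ and thereby rules out a spurious point of $\ms Z_f(I)$. Without it the implication fails, since $1$ vanishes everywhere in the trivial ring, so I would make explicit in the write‑up where nontriviality of $B$ is used, namely in excluding the empty‑check at the evaluation step.
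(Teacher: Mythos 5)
Your argument is correct and is essentially the paper's own proof read contrapositively: both hinge on the inclusion $\ms I(\ms Z(I))\subseteq\ms I(\ms Z_f(I))$ from the definition of a geometrically closed homomorphism, together with the observation that nontriviality of $B$ prevents the constant $1$ from vanishing at a point of $\ms Z_f(I)$, so that $\ms Z(I)=\emptyset$ iff $\ms I(\ms Z(I))=A[X]$. No gap; the two write-ups differ only in the direction of the implication.
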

  \begin{proof}
  Let $f:A\to B$ be such a homomorphism and $I\subseteq A[X]$ an ideal of finite type over $A$, such that $\ms Z_f(I)\neq\emptyset$. Let $b\in \ms Z_f(I)$ and
  $e_b:A[X]\to B$ be the evaluation morphism at $b$ : by hypothesis on $B$ the ideal $Ker(e_b)$ cannot be the whole ring $A[X]$, whence $\ms I(\ms Z_f(I))\neq A[X]$.
   By hypothesis on $f$ we then have $\ms I(\ms Z(I))\neq A[X]$, which means that $\ms Z(I)\neq\emptyset$, and $f$ is e.c.
  \end{proof}

  \begin{cor}\label{CORNOETH}
  If $\C$ is a category of fields, then $A$ is existentially closed in $\C^-$ if and only if it is geometrically closed
  in $U_{\C}$ (and then is a field).
  \end{cor}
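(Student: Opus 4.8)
The plan is to read the equivalence as the conjunction of an implication already in hand and its converse, and to peel off the parenthetical ``is a field'' at the very end. The forward implication—that a ring existentially closed in $\C^-$ is geometrically closed in $U_\C$ (and a field)—is precisely Theorem~\ref{ECNOET}, so no new argument is needed there. Consequently the real work lies entirely in the converse: assuming that $A$ is geometrically closed in $U_\C$, I want to deduce that $A$ is existentially closed in $\C^-$.

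For the membership clause, note that $A$ geometrically closed in $U_\C$ presupposes $A\in U_\C$; since a ring embeddable into a field of $\C$ in particular continues into one, we have $U_\C\subseteq\C^-$ and hence $A\in\C^-$. It then remains to show that every $A$-algebra $f:A\to B$ with $B\in\C^-$ is existentially closed. So I would fix an ideal $I$ of finite type over $A$ with $\ms Z_f(I)\neq\emptyset$, pick a point $b\in\ms Z_f(I)$, and choose a continuation $g:B\to C$ of $B$ into a field $C\in\C$, which exists by the definition of $\C^-$. The point is to transport the problem along $g$: applying $g$ coordinatewise to $b$ produces a tuple $g\cdot b\in C^X$, and since $g$ is a ring homomorphism the $(gf)$-evaluation of any $p\in I$ at $g\cdot b$ equals $g$ of its $f$-evaluation at $b$, hence is $0$. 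Thus $g\cdot b\in\ms Z_{gf}(I)$, so $\ms Z_{gf}(I)\neq\emptyset$.

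Now I would bring the hypothesis on $A$ to bear through the field $C$. As $C\in\C$ embeds into itself it lies in $U_\C$, so the composite $gf:A\to C$ is an $A$-algebra in $U_\C$; by the lemma equating geometric closedness of $A$ in a category with geometric closedness of every $A$-algebra therein, $gf$ is a geometrically closed homomorphism. Since $C$ is a field and hence nontrivial, Lemma~\ref{IMMGEO} upgrades this to $gf$ being existentially closed. Applying existential closedness of $gf$ to the ideal $I$, from $\ms Z_{gf}(I)\neq\emptyset$ I conclude $\ms Z(I)\neq\emptyset$, which is exactly what existential closedness of $f$ requires. This establishes that $A$ is existentially closed in $\C^-$; feeding this back into Theorem~\ref{ECNOET} then shows $A$ is a field, settling the parenthetical claim.

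I expect no genuine obstacle beyond bookkeeping; the one point to get right is the reduction from the ambient category $\C^-$ to $U_\C$, i.e.\ the observation that pushing a rational point of $\ms Z_f(I)$ forward along an arbitrary continuation $g:B\to C$ into a field keeps it a zero of $I$, so that the geometric-closedness hypothesis—which only constrains algebras living in $U_\C$—can actually be invoked. Everything else is a direct chaining of Theorem~\ref{ECNOET}, the characterisation of geometric closedness by that of algebras, and Lemma~\ref{IMMGEO}.
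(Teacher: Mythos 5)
Your proof is correct and follows essentially the same route as the paper's: reduce to the converse via Theorem~\ref{ECNOET}, continue $f:A\to B$ by some $g:B\to C$ with $C\in\C$, observe that $gf$ is geometrically closed by hypothesis and hence existentially closed by Lemma~\ref{IMMGEO} since the field $C$ is nontrivial, and conclude that $f$ is existentially closed. The only difference is that you spell out the final step (pushing a point of $\ms Z_f(I)$ forward along $g$ to land in $\ms Z_{gf}(I)$), which the paper leaves implicit with ``whence $f$ is e.c as well.''
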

  \begin{proof}
  By \ref{ECNOET}, we only need to proof that if $A$ is g.c in $U_\C$, then $A$ is e.c in $\C^-$. If $f:A\to B$ is a homomorphism into a ring
  of $\C^-$, then we may continue by $g:B\to C$, with $C$ in $\C$. By hypothesis, the homomorphism $gf$ is geometrically closed and as a field is not
  trivial, by the preceding lemma it is e.c, whence $f$ is e.c as well, and $A$ is existentially closed in $\C^-$.
  \end{proof}
  
  \begin{cor}\label{RCF}
  The existentially closed pre-real rings are the real closed fields.
  \end{cor}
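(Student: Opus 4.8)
The plan is to prove the two inclusions by specialising Corollary~\ref{CORNOETH} to the case where $\C$ is the category of real fields, and feeding it the classical theory of real closed fields together with the real Nullstellensatz. By the example introduced just before Theorem~\ref{ECNOET}, with this choice of $\C$ the category $\C^-$ is precisely that of pre-real rings, so ``existentially closed pre-real ring'' means exactly ``ring existentially closed in $\C^-$''. Corollary~\ref{CORNOETH} already guarantees that such a ring $A$ is a field, and being pre-real it is then a real field; the whole content of the statement is thus to identify these real fields with the real closed ones.

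For the forward implication I would show that an $A$ existentially closed in $\C^-$ is real closed by transferring algebraic data from a real closure. Let $R$ be a real closure of the real field $A$: since $R$ is again a real field it lies in $\C\subseteq\C^-$, so the inclusion $f:A\into R$ is an $A$-algebra in $\C^-$ and is therefore e.c. I would then check the two criteria for real closedness recalled in the example. Given $a\in A$, one of $a,-a$ is a square in $R$; if $a=r^2$ with $r\in R$ then $r\in\ms Z_f(x^2-a)$, so by existential closedness the ideal $(x^2-a)\le A[x]$ has a zero in $A$, and symmetrically in the case of $-a$, so that $a$ or $-a$ is a square in $A$. Likewise any odd degree $p\in A[x]$ has a root in $R$, giving $\ms Z_f(p)\neq\emptyset$, whence by e.c. $p$ has a root in $A$. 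Thus $A$ is real closed.

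For the converse I would run Corollary~\ref{CORNOETH} in the other direction: it suffices to prove that a real closed field $A$ is geometrically closed in $U_\C$, and by the remark following Proposition~\ref{VARIRR} this is equivalent to being geometrically closed in $\C$ itself. As $\C$ is the category of real fields, its $\C$-primes are exactly the real primes and $\crad I$ coincides with the real radical $\sqrt[R]{I}$, so geometric closedness in $\C$ amounts to the equality $\ms I(\ms Z(I))=\sqrt[R]{I}$ for every ideal $I$ of finite type over $A$. This is the real Nullstellensatz for real closed fields (\cite{BCR}, Chapter~4), which I would invoke directly. Corollary~\ref{CORNOETH} then returns that $A$ is existentially closed in $\C^-$, i.e. an existentially closed pre-real ring, and $A$ is of course pre-real since it is a real field.

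The main obstacle is precisely this last input: the logical and categorical side reduces, via Corollary~\ref{CORNOETH} and the dictionary supplied by the example, to pure bookkeeping, but the equality $\ms I(\ms Z(I))=\sqrt[R]{I}$ is the one genuinely substantial fact, and the cleanest course is to cite the real Nullstellensatz rather than reprove it inside this framework. A minor point to verify along the way is that the real closure $R$ is legitimately an object of $\C^-$ over $A$ — which it is, being itself a real field — so that the existential closedness of $A$ really applies to the embedding $f:A\into R$.
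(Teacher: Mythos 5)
Your proposal is correct and follows essentially the same route as the paper: the forward direction reflects the two defining properties of real closedness through an existentially closed homomorphism into a real closed field, and the converse derives geometric closedness in $U_\C$ from the real Nullstellensatz and then applies Corollary~\ref{CORNOETH}. The only cosmetic difference is that you first invoke Corollary~\ref{CORNOETH} to see that $A$ is a (real) field and then embed it in its real closure, where the paper obtains a homomorphism into a real closed field directly from pre-reality and observes afterwards that $A$ must be a field.
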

  \begin{proof}
  If $A$ is existentially closed as a pre-real ring, there is an homomorphism $f:A\to k$ into a real closed field $k$, because every real field embeds
  into a real closed field. The ring $A$ itself is obviously a field and if $a\in A$, one of $f(a)$ or $-f(a)$ is a square in $k$, a property which is
  clearly reflected through $f$ by existential completeness. A similar argument shows that every polynomial $y^n + a_0y^{n-1} +\ldots +a_n$ of odd degree $n$
  with coefficients in $A$ has a root in the real closed field $k$, hence in $A$ by e.c, so $A$ itself is real closed.\\
  Reciprocally, let $\C$ be the category of real fields and $k$ a real closed field. As the real radical is our $\C$-radical, by the real Nullstellensatz
  (\cite{BCR}, Theorem 4.1.4) $k$ is geometrically closed in $U_\C$, hence by the preceding corollary it is existentially closed in $\C^-$, the category of
  pre-real rings.
  Notice that all this also shows that the real closed fields are the existentially closed real fields.
  \end{proof}

\section{Ultraproducts and finiteness}\label{SEC2}
  Corollary~\ref{CORNOETH} connects the two notions of existential completeness and geometric closedness in the manner of our
  characterisation \ref{CAC} of a.c fields. The finiteness hypothesis in the definition of e.c homomorphisms was exploited there through noetherianity,
  but may be invested in another way to give us a deeper understanding of the connection. In the preceding analysis $A$ relates to the ambient category
  $\C$, and we could define $A$ being (intrinsically) ``geometrically closed'' if it would be so in the special category $S_A$ of rings embedded into a
  power of $A$, as it contains information about the affine geometry of $A$. This in particular would mean that $A$ is e.c among non-trivial members of
  $S_A$, but this is not clear where it would lead us in this generality, and if it would provide a mean to discriminate the irreducible varieties, as the
  category $U_{S_A}$ is $S_A$ itself. We rather engage in the study of certain categories of rings in which we will replace the ``local'' hypothesis of
  noetherianity by a ``global'' finiteness hypothesis related to ultrafilters.
  
  \begin{defi}
  If $m:P\to Q$ is primitive, we note $\exists_m A$ the set $A_m(A_Q)$ of elements $a\in A_P$ such that there exists $c\in A_Q$ with $cm=a$.
  \begin{enumerate}
    \item A primitive morphism \emph{with parameter in $A$} is a pair $(m,a)$, where $m:P\to Q$ is primitive and $a\in A_P$ is the parameter
    \item If $X$ is a set of primitive morphisms with parameters among the objects of a subcategory $I$ of $\ms D_A$, a cocone $(B,(f_b)_{b\in I_0})$ for
    $D_{A\lceil I}$ \emph{realises (resp. avoids) $X$} if for every $(m,a)\in X$  we have $f_a\in \exists_m B$ (resp. $f_a\notin\exists_m B$)
    \item In case $f:A\to B$ is a ring homomorphism, we will say as well that \emph{$f$ realises (resp. avoids) $X$}, if the cocone
    $(B,(f\circ a)_{a\in \ms D_A})$ realises (resp. avoids) $X$.
  \end{enumerate}
  \end{defi}
 
  In~\cite{AR} we have the definition (2.27) of a \emph{pure} (ring) homomorphism $f:A\to B$, i.e such that for every $a\in A_P$ and $b\in A_Q$ with $fa=bm$,
  there exists $c:Q\to A$ such that $cm=a$. In other words, $f$ is pure if and only if $f^{-1} (\exists_m B)=\exists_m A$ for every primitive $m$, and
  they characterise as follows (see~\cite{AR}, 5.15 for a logical version compatible with~\cite{BYP}).
  
  \begin{lem}\label{CARIM}
  A ring homomorphism $f:A\to B$ is pure if and only if it is existentially closed.
  \end{lem}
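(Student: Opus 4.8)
The plan is to unwind both properties — purity and existential closedness — to statements about zero sets of finite-type ideals and then match them directly. Recall the two characterizations in play. Purity of $f:A\to B$ says $f^{-1}(\exists_m B) = \exists_m A$ for every primitive $m:P\to Q$; equivalently, whenever $a \in A_P$ and $b \in A_Q$ satisfy $fa = bm$, there already exists $c:Q\to A$ with $cm = a$. Existential closedness of $f$ says that for every finite-type ideal $I$ over $A$, if $\ms Z_f(I)\neq\emptyset$ then $\ms Z(I)\neq\emptyset$. So the crux is a dictionary translating ``a point of $\ms Z_f(I)$ lifts to a point of $\ms Z(I)$'' into ``a factorization $fa = bm$ descends through $m$''.

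First I would set up that dictionary in the direction (purity $\Rightarrow$ e.c.). Given a finite-type ideal $I = (p_1,\dots,p_k)\leq A[X]$ with $X$ finite, a point of $\ms Z_f(I)$ is a $B$-point, i.e. a homomorphism $A[X]/I \to B$ extending $f$. The key observation is that $A[X]/I$ is, relative to $A$, a finitely presented $A$-algebra, so the inclusion $A \to A[X]/I$ is (up to the chosen presentation) exactly a primitive morphism $m:P\to Q$ in $\AA^{op}$ with $P$ encoding $A$'s relevant subobject and $Q = A[X]/I$. The canonical element $a\in A_P$ is the structure map and a $B$-point of the variety furnishes $b\in A_Q$ (wait — more precisely $b\in B_Q$) with $fa = bm$. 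Purity then hands us $c:Q\to A$ with $cm=a$, and this $c$ is precisely an $A$-point of $\ms Z(I)$. Thus $\ms Z_f(I)\neq\emptyset \Rightarrow \ms Z(I)\neq\emptyset$, which is existential closedness.

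For the converse (e.c. $\Rightarrow$ purity) I would reverse the translation. Given a primitive $m:P\to Q$ and elements $a\in A_P$, $b\in B_Q$ with $fa = bm$, I encode the factorization problem ``does $a$ come from $A_Q$ via $m$?'' as a solvability-of-equations problem. Writing $Q = P[y_1,\dots,y_n]/(q_1,\dots,q_r)$ as a finitely presented $P$-algebra, the datum of $b$ compatible with $fa$ produces a finite-type ideal $I$ over $A$ (obtained by pushing the relations $q_j$ forward along $a:P\to A$ into $A[y_1,\dots,y_n]$) whose $B$-points are exactly the factorizations through $m$ over $B$, and whose $A$-points are the factorizations through $m$ over $A$. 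Since $b$ witnesses $\ms Z_f(I)\neq\emptyset$, existential closedness gives $\ms Z(I)\neq\emptyset$, i.e. the desired $c:Q\to A$ with $cm=a$. Hence $f^{-1}(\exists_m B)\subseteq \exists_m A$; the reverse inclusion is automatic by functoriality, so $f$ is pure.

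The main obstacle I anticipate is purely bookkeeping: making the correspondence between ``primitive morphisms $m:P\to Q$'' and ``finite-type ideals $I\leq A[X]$'' precise and symmetric, since a primitive morphism records a finitely presented extension while a finite-type ideal records a system of polynomial equations, and one must check that ``$a\in\exists_m B$'' and ``$\ms Z_f(I)\neq\emptyset$'' are genuinely the same condition under the encoding in both directions. Once that identification is nailed down, each implication is a one-line invocation of the relevant definition. I would therefore spend most of the effort verifying that every finite-type ideal arises from a primitive morphism and conversely, which is essentially the statement that f.p. $A$-algebras are exactly quotients $A[X]/I$ with $X$ finite and $I$ finitely generated — a fact implicit in the setup of $\AA^{op}$.
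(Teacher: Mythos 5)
Your proposal is correct and follows essentially the same route as the paper: both arguments rest on the identification of finitely presented $A$-algebras $A[X]/I$ with pushouts $A\otimes_P Q$ of primitive morphisms along a parameter $a:P\to A$, under which ``$\ms Z_f(I)\neq\emptyset$'' becomes ``$fa\in\exists_m B$'' and ``$\ms Z(I)\neq\emptyset$'' becomes ``$a\in\exists_m A$'', so that the two definitions match by a diagram chase. You in fact supply more of the bookkeeping (the explicit encoding of coefficients into $P$ and of relations into $Q$, in both directions) than the paper, which dispatches it as ``an easy diagram chasing''.
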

  \begin{proof}
  An $A$-algebra $g:A\to C$ is finitely presented as such if and only if it is isomorphic to a tensor product of the form $a\otimes m:A\to A\otimes_P Q$,
   where $(m:P\to Q,a)$ is a primitive morphism with parameter in $A$. Now $f$ is existentially closed if and only if for every diagram of the form
   \begin{displaymath}
    \begin{CD}
     A @>g>> C\\
     @| @VVhV\\
     A @>>f> B
    \end{CD}
   \end{displaymath}
  where $g$ is a finitely presented $A$-algebra, $g$ has a retraction, which is equivalent to saying, by an easy diagram chasing, that $f$ is pure.
  \end{proof}

  The ``finiteness'' of primitive morphisms may be combined with ultrafilters to get the following analogue of the compactness theorem of first order logic,
  which may be found in \cite{HD} as Theorem 6.1.1, or in a ``semantic version'' closer to what is expounded here as Theorem 4.5 of \cite{BP}. This
  ``finiteness property'' will enable us to derive a relative Nullstellensatz in
  the category $\C^+$ of e.c rings of a given category $\C$, when this last is closed under ultraproducts, and to get a ``global'' version
  (\ref{CARGEC}) of Corollary~\ref{CORNOETH}. If $f:c\to d$ is a morphism in a category $\D$, we write for convenience $\langle f \rangle$ to denote
   the subcategory generated by $f$.
  
  \begin{thm}\label{FPP}
  Let $X$ and $Y$ be two sets of primitive morphisms with parameters in $A$, such that for all finite subsets $X_0$ of $X$ and $Y_0$ of $Y$ and every
  finite subcategory $I$ of $\ms D_A$ containing their parameters, there exists a cocone for $D_{A\lceil I}$ with vertex in $\C$, realising $X_0$ and
  avoiding $Y_0$. Then there exists an homomorphism $f:A\to B$ into an ultraproduct of rings of $\C$, realising $X$ and avoiding $Y$.

  \end{thm}
  \begin{proof}
  We first build the ultraproduct $B$ as the vertex of a cocone for $D_A$. Consider the set $\Theta$ of triples $(I,x,y)$, where $x$ is a finite subset of $X$, $y$ is a
  finite subset of $Y$ and $I$ is a finite subcategory of $\ms D_A$ such that $I_0$ contains the parameters of the elements of $x\cup y$. For $(I,x,y)
  \in\Theta$, set $\Theta_{I,x,y}=\{(J,u,z)\in\Theta: I\subseteq J, x\subseteq u, y\subseteq z\}$ : their collection is a filter basis of subsets of
  $\Theta$, contained by the axiom of choice in an ultrafilter $\ms U$. By hypothesis, choose for every triple $(I,x,y)\in \Theta$ a ring $B_{I,x,y}$ in
  $\C$, which is the vertex of a cocone $(B_{I,x,y},(f_{a,I,x,y})_{a\in I_0})$, realising $x$ but avoiding $y$ : we consider the ultraproduct
  $B=\prod_\ms U B_{I,x,y}$. Now for every element $a\in A_P$ of $\ms D_A$, consider also the morphism $f_a: P=D_A(a)\to B$, defined by $f_a=\pi_a \circ
  (f_{a,I,x,y})_{(I,x,y)\in \Theta_a}$, where $\Theta_a=\{(I,x,y) : a\in I_0\}=\Theta_{\langle1_a\rangle,\emptyset,\emptyset}$ and $\pi_a : \prod_{\Theta_a} B_{I,x,y}\to B$ is the transition morphism
  of the ultraproduct. If $u:a\in A_P\to b\in A_Q$ is a morphism in $\ms D_A$, then considering
  $\Theta_u=\{(I,x,y):u\in I_1\}=\Theta_{\langle u\rangle,\emptyset,\emptyset}$, one readily sees that $(B,(f_a)_{\ms D_{A,0}})$
  is a cocone for $D_A$, whence there exists a unique homomorphism $f:A\to B$ such that $f_a=fa$ for every element $a:P\to A$.\\
  Secondly, we show that $f$ realises $X$ : let $(m:P\to Q,a)\in X$ and consider $\Theta_{(m,a)}=\{(I,x,y) : (m,a)\in x\}=\Theta_{\langle 1_a \rangle,
  \{(m,a)\},\emptyset}$. By choice of the $B_{I,x,y}$'s, for
  every $(I,x,y)\in \Theta_{(m,a)}$, we may choose a morphism $f_{(m,a),I,x,y}:Q\to B_{I,x,y}$ such that $f_{(m,a),I,x,y}\circ m = f_{a,I,x,y}$ (whereas
  there is no such for the elements of $Y$). Let us note $\prod f_{(m,a)}$ the family $(f_{(m,a),I,x,y})_{(I,x,y)\in\Theta_{(m,a)}}$ and
  set $f_{m,a}=\pi_{(m,a)}\circ \prod f_{(m,a)}$, where $\pi_{(m,a)}$ is the transition morphism
  $\prod_{\Theta_{(m,a)}} B_{I,x,y}\to B$. We have $f_{(m,a)}\circ m=\pi_{(m,a)}\circ \prod f_{(m,a)}\circ m= \pi_{(m,a)}
  \circ (f_{a,I,x,y})_{\Theta_{(m,a)}}=f_a=fa$ : we have $f_a\in \exists_m B$, which means that $f$ realises $(m,a)$, and $f$ realises $X$.\\
  Finally, we show that $f$ avoids $Y$. Suppose that $(m,a)\in Y$ but we have a commutative square of the form :
  \begin{displaymath}
   \begin{CD}
    P @>m>> Q\\
    @VaVV @VVbV \\
    A @>>f> B.
   \end{CD}
  \end{displaymath}
  As $Q$ is finitely presented and $B$ is a directed colimit, there exists $S\in \ms U$ and a family $b_S=(b_\theta)_S$ of morphisms $b_\theta:Q\to
  B_\theta$ such that $b=\pi_S\circ b_S$, where $\pi_S$ is the transition morphism of the ultraproduct.
  Introducing $\Theta^*_{(m,a)}=\Theta_{\langle 1_a \rangle,\emptyset,\{(m,a)\}}$, let $T=S\cap\Theta^*_{(m,a)} \in \ms U$. If $\pi_T$ is the corresponding transition morphism, we have
  $\pi_T\circ (b_\theta \circ m)_T = \pi_T\circ \pi^S_T \circ b_S \circ m= bm=fa =\pi_T\circ (f_{a,\theta})_{\theta\in T}$, where $f_{a,\theta}=f_{a,I,x,y}$ if $\theta=(I,x,y)$,
  the last equality being true by definition of $f$, as $T\subset\Theta^*_{(m,a)}\subset \Theta_a$. By definition of $B$ as a directed colimit, as
  $P$ is finitely presented this means that there exists a $T'\in \ms U$ such that $T'\subset T$ and 
  $(b_\theta\circ m)_{\theta\in T'}=(f_{a,\theta})_{\theta\in T'}$. As $T'$ is not empty
  we pick up $\theta\in T'$, for which we have $b_\theta\circ m=f_{a,\theta}$, hence $f_{a,\theta}\in \exists_m B_\theta$, and this
  contradicts the fact that $f_{a,\theta}\notin \exists_m B_\theta$, as $T'\subset \Theta_{(m,a)}^*$. We conclude that $f$ does in fact avoid $Y$.  
  \end{proof}

  We now introduce into this algebraic setting some analogues of notions taken from positive model theory. The following lemma underlies all that follows
  and is reminiscent of Lemma 14 in \cite{BYP}. It will be convenient to adapt here the notion of resultant found in
  \cite{BEL} (Definition 9) and inspired from \cite{HD} (Section 8.5).
  \begin{defi}
  If $m:P\to Q$ is a primitive morphism, the \emph{resultant of $m$ modulo $\C$}, in short $Res_\C [m]$, is the set of all primitive morphisms
  $n:P\to R$ such that in every ring $B$ of $\C$, we have $\exists_m B \cap \exists_n B=\emptyset$.
  \end{defi}

  \begin{lem}\label{RESCOMP}
  If $\C$ is closed under ultraproducts, $A$ is existentially closed in $\C$ if and only if for every primitive $m:P\to Q$ we have
  $A_P - \exists_m A =\bigcup \{\exists_n A : n\in Res_\C [m]\}$. 
  \end{lem}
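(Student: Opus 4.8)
The plan is to prove both implications by reducing, via Lemma~\ref{CARIM}, existential closedness to \emph{purity} of the $A$-algebras of $\C$, and to handle the nontrivial inclusion by the finiteness property of Theorem~\ref{FPP}. Throughout I take $A\in\C$ (this is built into being existentially closed in $\C$, and is also what makes the elementary inclusion below available). First I would record the trivial half, valid for any $A\in\C$: if $n\in Res_\C[m]$, then applying the definition of the resultant to $B=A$ gives $\exists_m A\cap\exists_n A=\emptyset$, so $\bigcup\{\exists_n A:n\in Res_\C[m]\}\subseteq A_P-\exists_m A$. Thus in each direction only the reverse inclusion carries content. I would also note the elementary fact that any homomorphism $g:A\to B$ sends $\exists_n A$ into $\exists_n B$, since a factorisation $a=cn$ yields $ga=(gc)n$.

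For the implication ``$\Leftarrow$'', assume the stated equality for every primitive $m$ and let $f:A\to B$ be an $A$-algebra with $B\in\C$; by Lemma~\ref{CARIM} it suffices to show $f$ is pure, i.e. $f^{-1}(\exists_m B)=\exists_m A$. Only the inclusion $\subseteq$ needs proof, so I take $a\in A_P-\exists_m A$; by hypothesis $a\in\exists_n A$ for some $n\in Res_\C[m]$, whence $fa\in\exists_n B$. Since $B\in\C$ and $n\in Res_\C[m]$ give $\exists_m B\cap\exists_n B=\emptyset$, I conclude $fa\notin\exists_m B$, which is exactly purity. Therefore every $A$-algebra in $\C$ is existentially closed and $A$ is e.c. in $\C$.

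The implication ``$\imp$'' is the substantial one and is where Theorem~\ref{FPP} enters. Assuming $A$ e.c. in $\C$, I fix $a\in A_P-\exists_m A$ and look for $n\in Res_\C[m]$ with $a\in\exists_n A$. For each finite subcategory $I$ of $\ms D_A$ containing $a$, I would form the finitely presented colimit $A_I$ of $D_{A\lceil I}$ (available since $\AA^{op}$ is finitely cocomplete), together with the coprojection $\iota_a:P\to A_I$ and the canonical factorisation $\lambda_I:A_I\to A$ satisfying $\lambda_I\iota_a=a$. Setting $n_I:=\iota_a$, which is primitive, I immediately get $a=\lambda_I n_I\in\exists_{n_I}A$, so each $n_I$ is a legitimate candidate. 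The decisive step is the translation: a cocone for $D_{A\lceil I}$ with vertex $B'\in\C$ is the same datum as a homomorphism $h:A_I\to B'$, and it realises $(m,a)$ precisely when $h\iota_a\in\exists_m B'$. Forming the base change $m':A_I\to A_I\otimes_P Q$ of $m$ along $\iota_a$, such an $h$ exists for some $B'\in\C$ if and only if $A_I\otimes_P Q$ admits a homomorphism into a ring of $\C$; and the latter is exactly the negation of $n_I\in Res_\C[m]$, since $\exists_m B'\cap\exists_{n_I}B'\neq\emptyset$ unwinds, by the universal property of the pushout, to the existence of a map $A_I\otimes_P Q\to B'$.

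To finish I would argue by dichotomy. If $n_I\in Res_\C[m]$ for some finite $I\ni a$, then $n_I$ is the sought morphism and we are done. Otherwise $n_I\notin Res_\C[m]$ for every such $I$, so by the translation above a cocone for $D_{A\lceil I}$ with vertex in $\C$ realising $(m,a)$ exists for each finite $I\ni a$ (the empty case required by Theorem~\ref{FPP} being covered by $\lambda_I$ into $A\in\C$). Theorem~\ref{FPP}, applied with $X=\{(m,a)\}$ and $Y=\emptyset$, then produces $f:A\to B$ with $B$ an ultraproduct of rings of $\C$ and $fa\in\exists_m B$; as $\C$ is closed under ultraproducts, $B\in\C$, so $f$ is pure by Lemma~\ref{CARIM} and $fa\in\exists_m B$ forces $a\in\exists_m A$, contradicting the choice of $a$. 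Hence the second case is impossible and some $n_I$ works. I expect the main obstacle to be precisely the translation in the previous paragraph: correctly identifying ``a local cocone into $\C$ realising $(m,a)$'' with ``$A_I\otimes_P Q$ maps into a ring of $\C$'', and recognising this as the failure of $n_I\in Res_\C[m]$, so that the hypotheses of Theorem~\ref{FPP} become available. Once this dictionary between base changes, cocones, and resultants is in place, everything else is formal.
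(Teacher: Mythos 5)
Your proof is correct and follows essentially the same route as the paper's: the easy inclusion and the reverse implication via purity (Lemma~\ref{CARIM}) are identical, and your forward direction is the paper's argument recast as a dichotomy — your $n_I=\iota_a:P\to A_I$ is exactly the coprojection $f_a:P\to B_I$ that the paper identifies as an element of $Res_\C[m]$ when no local realisation of $(m,a)$ exists, with Theorem~\ref{FPP} and closure under ultraproducts supplying the global realisation in the other branch. The translation you flag as the main obstacle (cocones with vertex $B'$ $\leftrightarrow$ maps $A_I\to B'$, and non-membership in the resultant $\leftrightarrow$ existence of a map $A_I\otimes_P Q\to B'$) is precisely the diagram chase the paper carries out.
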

  \begin{proof}
  We first show that the condition is necessary. Let $a\notin\exists_nA$ for every $n\in Res_\C [m]$ : we want to show that $a\in\exists_mA$. In order to achieve this, we are
  going to realise the couple $(m,a)$ in an $A$-algebra with codomain in $\C$ and then come back in $A$ by existential completeness.
  Suppose by way of contradiction that this is not possible; by the Finiteness Property~\ref{FPP}, there exists a finite subcategory $I$ of $\ms D_A$ such
  that $a\in I_0$ and $(m,a)$ has no common realisation with $I$ in $\C$. Let $B\in \C_0$ and $g_a\in B_P$; if $(B_I,(f_b)_{b\in I_0})$ is a
  colimit for $D_{A\lceil I}$ in $\AA^{op}$ and $g:B_I\to B$ is such that $gf_a=g_a$, we may set $g_b=gf_b$ for all $b\in I_0$ to get a cocone
  $(B,(g_b)_{b\in I_0})$ for $D_{A\lceil I}$. If we had $g_a\in \exists_m B$, there would be a $c\in B_Q$ such that $cm=g_a=gf_a$, as in the following
  diagram :
   \begin{displaymath}
    \begin{CD}
   P @>m>> Q\\
   @Vf_aVV @VVcV\\
   B_I @>>g> B.
    \end{CD}
   \end{displaymath}
  This would yield a common realisation of $(m,a)$ and $I$ in $(B,(g_b)_{I_0})$, which is excluded. We conclude that $g_a\notin\exists_m B$, whence
  $f_a\in Res_\C[m]$, because $B_I$ is finitely presented. Now, as $(A,(b)_{b\in I_0})$ is a cocone for $D_{A\lceil I}$ we have the following commutative diagram of rings :
   \begin{displaymath}
    \begin{CD}
     P @>f_a>> B_I\\
     @VaVV @VV!V,\\
     A @= A,
    \end{CD}
   \end{displaymath}
  a contradiction to the hypothesis that $a\notin \exists_{f_a} A$. All this shows that there exists a continuation $f:A\to B$ in $\C$ realising $(m,a)$;
  in other words, there exists a $b\in B_Q$ such that $bm=fa$. As $A$ is e.c in $\C$ we have $a\in \exists_m A$ by Lemma~\ref{CARIM} and the ``only if''
  direction if proved.\\
  Reciprocally, suppose that the complement of every set defined in $A$ by a primitive morphism $m$ is defined by $Res_\C[m]$ and let
  $f:A\to B$ be an homomorphism in $\C$. If $m:P\to Q$ is primitive and $a\in A_P$ is such that $fa\in \exists_m B$, then for every $n\in Res_\C[m]$ we
  have $b\notin \exists_n B$ by definition of the resultant. As $f$ is an homomorphism, this means we have $a\notin \exists_n A$, whence by hypothesis
   we get $a\in \exists_m A$, and $f$ is an immersion : $A$ is existentially closed in $\C$, and the proof is complete.
  \end{proof}
  
  \begin{thm}\label{POSNS}
  If $\C$ is closed under ultraproducts, every ring in $\C^+$ is geometrically closed in $\C^+$.
  \end{thm}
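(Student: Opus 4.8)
The plan is to prove the two inclusions of the equality $\ms I(\ms Z(I))=\cprad{I}$ for an ideal $I$ of finite type over $A[X]$ (with $X$ finite), $A$ a ring of $\C^+$. For $\cprad{I}\subseteq\ms I(\ms Z(I))$ I would argue as in the easy half of~\ref{CAC}: for each rational point $a\in\ms Z(I)$ the evaluation morphism $e_a:A[X]\to A$ has kernel $\ker e_a\supseteq I$ with $A[X]/\ker e_a\cong A\in\C^+$, so $\ker e_a$ is itself a $\C^+$-prime containing $I$; hence $\cprad{I}\subseteq\ker e_a$ and every $p\in\cprad{I}$ vanishes at $a$. As $a$ is arbitrary this gives $\cprad{I}\subseteq\ms I(\ms Z(I))$. (This is also the trivial half of the identity $\cprad{I}=\bigcap_{f}\ms I(\ms Z_f(I))$ obtained by applying to $\C^+$ the lemma preceding~\ref{IMMGEO}, taking $f=\mathrm{id}_A$.)

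For the reverse inclusion $\ms I(\ms Z(I))\subseteq\cprad{I}$ I would take $p\notin\cprad{I}$ and produce a rational point of $I$ in $A$ at which $p$ does not vanish. By definition there is a $\C^+$-prime $\mf q\supseteq I$ with $p\notin\mf q$ and an embedding $A[X]/\mf q\into B$ with $B\in\C^+$; writing $f:A\to B$ for the restriction, the images of the $x_i$ form a point $c\in B^X$ that is a zero of $I$ with $p(c)\neq 0$. Since $A\in\C^+$, the homomorphism $f$ is existentially closed, hence pure by~\ref{CARIM}. The device, following the inversion step of~\ref{CAC}, is the augmented ideal $I\cdot A[X,y]+(p(X)\,y-1)$: a rational point for it in $A^{X\cup\{y\}}$ projects to a point $a\in\ms Z(I)$ with $p(a)$ invertible, in particular $p(a)\neq 0$, which is what we want. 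By purity it therefore suffices to realise this augmented system in some $\C$-algebra of $A$ and then pull the realisation back to $A$.

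To realise it I would invoke the Finiteness Property~\ref{FPP} for the single primitive morphism with parameter $a_0\in A_P$ encoding the coefficients of the generators of $I$ and of $p$, whose realisation in a ring means precisely that $I$ has a zero there at which $p$ is invertible. This reduces the problem to the finite-consistency hypothesis of~\ref{FPP}: for every finite fragment of the diagram of $A$ around $a_0$ one must exhibit a ring of $\C$ over that fragment carrying such a point. Here the ring $B$ supplies a point $c$ of $I$ with $p(c)\neq 0$, and the remaining task is to upgrade ``$p(c)\neq 0$'' to ``$p$ invertible at a point'' inside $\C$; this is where the existential closedness of the target $B\in\C^+$ and the closure of $\C$ under ultraproducts are meant to combine, just as nonzero becomes invertible automatically in the field target of~\ref{CAC}.

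The hard part is precisely this last upgrade. In the field case of~\ref{CAC} a nonzero value is invertible for free; for a general $B\in\C^+$ the element $p(c)$ is only nonzero, and passing to a localisation $B_{p(c)}$ need not stay within $\C$. I expect the crux to be showing that $p(c)$ can be inverted in a continuation of $B$ lying in $\C$ (equivalently, that the augmented system is finitely consistent over $\C$): since $B$ is itself existentially closed, an inversion available in any $\C$-continuation is already realised in $B$, while the ultraproduct construction of~\ref{FPP} together with the resultant calculus of~\ref{RESCOMP} should manufacture the required continuation from the finite data. Once invertibility is secured at every finite stage, \ref{FPP} yields a homomorphism from $A$ into an ultraproduct of rings of $\C$ (hence of $\C$) realising the augmented system, and purity~\ref{CARIM} transports a solution back to $A$, giving the desired point of $\ms Z(I)$ off $\{p=0\}$ and completing $\ms I(\ms Z(I))\subseteq\cprad{I}$.
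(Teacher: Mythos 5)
Your easy inclusion $\cprad{I}\subseteq\ms I(\ms Z(I))$ is correct (the paper does not even spell it out), and your setup for the reverse inclusion --- choosing a $\C^+$-prime $\mf p\supseteq I$ with $q\notin\mf p$ and the induced $f:A\to B$ with $B\in\C^+$ carrying a zero $c$ of $I$ off $\ms Z(q)$ --- matches the paper. The gap is exactly where you locate it, and it is fatal rather than technical: the Rabinowitsch device is the wrong tool here. Realising the augmented ideal $I+(q\,y-1)$ means finding a zero of $I$ at which $q$ is \emph{invertible}, which is strictly stronger than the conclusion you need (a zero at which $q\neq 0$), and it is unobtainable in general. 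Indeed, as you yourself observe, since $B$ is existentially closed in $\C$, an inversion of $q(c)$ available in any $\C$-continuation of $B$ would already be realised in $B$; so your hoped-for ``upgrade'' is equivalent to $q(c)$ already being a unit of $B$, and no amount of resultant calculus or ultraproduct juggling can manufacture it otherwise. Concretely, let $\C$ be the full subcategory whose objects are the rings isomorphic to $\Z/4\Z$: it is closed under ultraproducts (an ultraproduct of copies of a finite ring is that ring) and $\C^+=\C$. For $A=\Z/4\Z$, $I=(0)$ in no variables and $q=2$, the theorem's conclusion holds ($\ms Z(I)$ is a point at which $q=2\neq 0$, and $\cprad{I}=0$), yet the system $qy=1$ has no solution in any ring of $\C$. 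So your strategy proves the theorem only in the situations where nonzero values of $q$ are automatically units --- essentially the field case of~\ref{CAC}.

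The paper's proof never inverts anything. It encodes ``$I$ has a zero'' and ``$I$ and $q$ have a common zero'' by primitive morphisms with parameters, $(m,a)$ and $(m\otimes n,a\otimes b)$, so that the datum in $B$ reads: $fa\in\exists_m B$ and $f(a\otimes b)\notin\exists_{m\otimes n}B$. The negative condition is then converted into a \emph{positive} one using Lemma~\ref{RESCOMP} applied to $B$ (here is where closure of $\C$ under ultraproducts enters): since $B$ is existentially closed in $\C$, the complement of $\exists_{m\otimes n}B$ is covered by the sets $\exists_l B$ for $l\in Res_\C[m\otimes n]$, so $f(a\otimes b)\in\exists_l B$ for some such $l$. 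Purity of $f$ (Lemma~\ref{CARIM}, since $A\in\C^+_0$) pulls both positive conditions back: $a\in\exists_m A$ and $a\otimes b\in\exists_l A$. Finally the definition of the resultant, applied in $A\in\C_0$, turns $a\otimes b\in\exists_l A$ back into the negative condition $a\otimes b\notin\exists_{m\otimes n}A$, i.e. $\ms Z(I)-\ms Z(q)\neq\emptyset$. In short, the resultant plays the role that inversion of $q$ plays for fields; replacing it by an actual inversion is precisely the step that does not survive the passage from fields to general rings of $\C^+$.
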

  \begin{proof}
  Let $X$ be a finite set and $I$ a finitely generated ideal of $A[X]$ : we want to show that $\ms I(\ms Z(I))\subseteq\cprad I$, so let $q\notin\cprad I$.
  By definition, there exists a $\C^+$-prime $\mf p$ containing $I$ and not $q$, so we have a ring $B$ of $\C^+$ and a composite morphism
  $f:A\to A[X]/\mf p \into B$, where the last is an embedding and the class $b$ of $X$ in $B$ is a solution of $I$ which avoids $q$, so we have
  $\ms Z_f(I)-\ms Z_f(q)\neq\emptyset$. As $I$ is of finite type, there exist two primitive morphisms with parameters in $A$, say $(m:P\to Q,a)$ and
  $(n:R\to S,b)$ and two pushouts as pictured in the following diagrams :
   \begin{displaymath}
    \begin{CD}
     P     @>m>> Q     \\
     @VaVV       @VVvV \\
     A     @>>u>  {A[X]}/I
    \end{CD}\hspace{1cm}    
    \begin{CD}
     R     @>n>> S \\
     @VbVV       @VVyV\\
     A     @>>x>  {A[X]}/{(q)}. 
    \end{CD}
   \end{displaymath}
  The condition ``$\ms Z_f(I)-\ms Z_f(q)\neq\emptyset$'' is then equivalent to ``$fa\in \exists_m B$ and $f\circ (a\otimes b)\notin \exists_{m\otimes n} B$'',
  where $m\otimes n :P\otimes R \to Q\otimes S$ and $a\otimes b : P\otimes R\to A$ are the homomorphisms induced by the tensor products. As this condition
  is satisfied and $B$ is e.c in $\C$, by the preceding lemma there is an $(l:P\otimes R\to S)\in Res_\C [m\otimes n]$ such that $f\circ (a\otimes b)
  \in \exists_l B$. Now as $A$ itself is e.c in $\C$, $f$ is existentially closed and by Lemma~\ref{CARIM} we have $a\in \exists_m A$ and
  $a\otimes b\in \exists_l A$, whence by Lemma~\ref{RESCOMP} again we have $a\otimes b\notin \exists_{m\otimes n} A$, which means that
  $\ms Z(I)-\ms Z(q)\neq\emptyset$ and $q\notin \ms I(\ms Z(I))$ : $A$ is geometrically closed in $\C^+$.
  \end{proof}

  \begin{defi}\label{DEFPMC}
  We will say that $\C$ is \emph{pseudo model complete (p.m.c)} if every homomorphism in $\C$ is existentially closed, i.e if $\C=\C^+$.
  \end{defi}

  \begin{cor}\label{CARGEC}
  If $\C$ is closed under ultraproducts, then $\C^*$ is pseudo model complete if and only if every ring in $\C^*$ is geometrically closed.
  \end{cor}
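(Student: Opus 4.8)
The plan is to read this corollary as the relativisation to $\C^*$ of two facts already established for a general category: Theorem~\ref{POSNS} for the forward implication, and Lemma~\ref{IMMGEO} together with the algebra-wise description of geometric closedness for the converse. Here ``geometrically closed'' is to be understood as ``geometrically closed in $\C^*$'', so that the radical appearing in the relevant Nullstellensatz is the $\C^*$-radical $\sqrt[\C^*]{I}$. The one preliminary I would record before anything else is that the standing hypothesis descends to $\C^*$: since $\C$ is closed under ultraproducts and an ultraproduct of a family of nontrivial rings is again nontrivial (as noted in the introduction, one nontrivial factor already forces this), the category $\C^*$ is itself closed under ultraproducts. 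This is the single verification on which both directions lean, and I expect it to be the only point genuinely requiring care.

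For the forward implication, assume $\C^*$ is pseudo model complete, that is $\C^*=(\C^*)^+$ by Definition~\ref{DEFPMC}. I would then invoke Theorem~\ref{POSNS} with $\C$ replaced by $\C^*$: as $\C^*$ is closed under ultraproducts, every ring of $(\C^*)^+$ is geometrically closed in $(\C^*)^+$. Feeding $\C^*=(\C^*)^+$ back in on both sides — the $(\C^*)^+$-primes are exactly the $\C^*$-primes, so the two radicals coincide — this reads precisely: every ring of $\C^*$ is geometrically closed in $\C^*$, as required.

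For the converse, assume every ring $A$ of $\C^*$ is geometrically closed in $\C^*$, and let $f:A\to B$ be an arbitrary homomorphism of $\C^*$; I want $f$ existentially closed. Applying to $\C^*$ the lemma characterising geometric closedness of $A$ through its algebras (``$A$ is geometrically closed in $\C^*$ if and only if every $A$-algebra in $\C^*$ is a geometrically closed homomorphism''), the hypothesis on $A$ makes $f$ a geometrically closed homomorphism. Since $B$ lies in $\C^*$ it is nontrivial, so Lemma~\ref{IMMGEO} upgrades $f$ to an existentially closed homomorphism. As $f$ was arbitrary, every morphism of $\C^*$ is e.c, i.e. $\C^*=(\C^*)^+$, which is pseudo model completeness by Definition~\ref{DEFPMC}.

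The substance of the argument sits entirely in the earlier results; the corollary merely packages Theorem~\ref{POSNS} (one way) against Lemma~\ref{IMMGEO} and the algebra-wise description of geometric closedness (the other way). The only place I anticipate being explicit is the ultraproduct-closure of $\C^*$, which is exactly what licenses applying Theorem~\ref{POSNS} to $\C^*$ in the first place; beyond that, the work is the bookkeeping needed to keep every instance of ``geometrically closed'', ``$\C$-prime'' and ``$\C$-radical'' consistently taken relative to $\C^*$, and to identify $(\C^*)^+$ with $\C^*$ under the pseudo model completeness hypothesis.
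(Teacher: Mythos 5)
Your proof is correct and follows the paper's own argument essentially verbatim: the forward direction applies Theorem~\ref{POSNS} to $\C^*$ after identifying $\C^*$ with $(\C^*)^+$ and checking ultraproduct-closure, and the converse combines the algebra-wise lemma on geometric closedness with Lemma~\ref{IMMGEO}. The only difference is that you spell out two steps the paper leaves implicit (why $\C^*$ inherits closure under ultraproducts, and the use of the unnamed lemma to turn the hypothesis on $A$ into geometric closedness of $f$), which is harmless.
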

  \begin{proof}
  If $\C^*$ is p.m.c, then we have $\C^*=(\C^{*})^+$ and this last category is also closed under ultraproducts, so by the theorem every ring
  in $\C^*$ is geometrically closed.\\
  Reciprocally, if $f:A\to B$ is an homomorphism in $\C^*$, by hypothesis $f$ is geometrically closed so by Lemma~\ref{IMMGEO} it is an
  immersion, i.e. $\C^*$ is p.m.c.
  \end{proof}

\section{Arithmetic theories}\label{SEC3}
  The graph of the equality relation in $A$ is ``defined'' by $e:\Z[x,y]\to \Z[x,y]/(x-y)$, in the sense that $a=b$ if and only if $(a,b)\in
  \exists_e A$, and if $A$ is a field, the graph of the \emph{inequality} relation is defined by the primitive morphism
  $i:\Z[x,y]\to \Z[x,y,z]/(z(y-x)-1)$. Consider now the (real closed) field $\mb R$ : its nonnegative elements are precisely
  the squares, so we have $a\leq b$ if and only if there exists $c$ such that $b-a=c^2$ and the ordering $\leq$ has graph $\exists_o \mb R$,
  for the primitive
  $o:\Z[x,y]\to\Z[x,y,z]/(z^2-y+x)$. Alternatively, we could have defined its complement, the strict order $>$ which graph is given by
  $s:\Z[x,y]\to \Z[x,y,z]/(z^2(x-y)-1)$. Those and several relations naturally arising in rings and fields may be ``defined'' by primitive morphisms.
  This is the occasion to introduce the algebraic counterpart of first order logic's formulas, and the key notion of an ``arithmetic theory'', an
  analogue in $\AA^{op}$ of ``coherent'' (see \cite{MR}) or ``h-inductive'' (see \cite{BYP}) theories of rings.

  \begin{defi}
  Let $P$ be a finitely presented ring.
   \begin{enumerate}
    \item An \emph{arithmetic formula (on $P$)} will be a finite set $F$ of primitive morphisms with same domain ($P$)
    \item An \emph{arithmetic axiom} will be a couple $\chi=[P,F]$, where $F$ is a formula on $P$, and $A$ will be a
    \emph{model of $\chi$} (noted $A\mo\chi$) if $A_P=\bigcup_{m\in F} \exists_m A$
    \item An \emph{arithmetic theory} will be a set $\t$ of arithmetic axioms and $A$ will be a \emph{model} of $\t$ if $A\mo\chi$ for every $\chi\in\t$.
   \end{enumerate}
  An \emph{axiomatisation} of a category $\C$ is an arithmetic theory $\t$ such that $\C=\M(\t)$ is the full subcategory of models of $\t$, and such a
  category we call itself \emph{arithmetic}. 
  \end{defi}
  
  If $\ov x=x_1,\ldots,x_m$ and $\ov y=y_1,\ldots,y_n$ are finite tuples of variables and $\ov f=f_1,\ldots,f_p$ and $\ov g=g_1,\ldots,g_q$ are finite
  tuples of polynomials, we say that the primitive morphism $\Z[\ov x]/(\ov f)\to \Z[\ov x,\ov y]/(\ov f,\ov g)$ is \emph{normal}. An
  arithmetic formula will be \emph{normal} if its elements are, the same for an axiom. If $P=\Z[\ov x]/(\ov f)$ and $F=\{m_j:\Z[\ov x]/(\ov f)
  \to \Z[\ov x,\ov y_j]/(\ov f,\ov g_j), j=1,\ldots,k\}$, where $\ov g_j=g_{j1},\ldots,g_{jl_j}$, we will write
  \begin{displaymath}
  [P,F]=[\forall \ov x\ (\bigwedge_{i=1}^p f_i(\ov x)=0) \imp \bigvee_{j=1}^k \exists\ov y_j (\bigwedge_{l=1}^{l_j} g_{jl}(\ov x,\ov y_j)=0)].
  \end{displaymath}
  as an abbreviation, as this will make the presentation of examples and the translation of intuitive axiomes easier (this is the reverse approach to
  the translation of coherent axioms into morphisms, as presented in \cite{BE1}). If $F=\emptyset$, we will
  occasionally write $[\forall \ov x\ (\bigwedge_{i=1}^p f_i(\ov x)=0) \imp \bot]$ and if $p=0$, we will write $\top$ instead of the empty conjunction
  on the left of the implication.
  \begin{rem}
  We emphasise the fact that those ``arithmetic'' formulas and theories avoid the use of any first order logic proper and that the notation with
  quantifiers and boolean operators is only a matter of typing and translating intuition into arithmetic formulas.
  \end{rem}
  
  \begin{ex}
  \begin{enumerate}
   \item Non-trivial rings are the models of the axiom $\chi=[\mb 1,\emptyset]$. Rings of characteristic $n\geq 2$ are the models of the axiom
   $\chi_n=[\Z,\{\Z\to \Z/n\Z\}]=[\top\imp (n=0)]$. Rings of characteristic $0$ are the models of the infinite theory
   $\chi_0=\{[\Z/n\Z,\emptyset]=[(n=0)\imp\bot] : n\in \N^*\}$.
   \item Integral domains are the models of the theory $\t_{id}$ which axioms are $\chi$ and
   $[\forall x,y \ (xy=0) \imp (x=0)\vee (y=0)]=[\Z[x,y]/(xy),\{\Z[x,y]/(xy)\to \Z[x,y]/(x), \Z[x,y]/(xy)\to \Z[x,y]/(y)\}]$. Reduced rings are the models
   of the theory $\t_{rr}$ which axioms are all $[\forall x \ (x^n=0)\imp (x=0)]$, for $n\in \N^*$.
   \item Fields are the models of $\t_f=\t_{id}\cup \{[\forall x\ \top\imp (x=0)\vee \exists y(xy-1=0)]\}$ and algebraically closed fields are the models
   of $\t_{acf}=\t_f\cup \{[\forall x_1,\ldots,x_n\ \top\imp\exists y(y^n+x_1y^{n-1} +\ldots + x_n=0)] : n\in \N^*\}$.
  \end{enumerate}
  \end{ex}

  \begin{prop}\label{DIAMOR}
  Let $X=\{(m_k:P_k\to Q_k,a_k), k=1,\ldots,n \}$ be a finite set of primitive morphisms with parameters in $A$ and $I$ a finite subcategory of $\ms D_A$
  containing the parameters of $X$. If $(P^*,(i_a)_{a\in I_0})$ is a colimit for $I$ in $\AA^{op}$ and $m_k^*$ is obtained from $m_k$ by the change of basis
  $i_{a_k}:P_k\to P^*$ in $\AA^{op}$ , then for every ring $B$ we have $B\mo [P^*,\{m_k^*\}] \iff$ every cocone for $I$ with vertex $B$ realises one of
  the $(m_k,a_k)$'s.
  \end{prop}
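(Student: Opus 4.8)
The plan is to reduce both sides of the equivalence to the single assertion that every ring homomorphism $c:P^*\to B$ factors through one of the base-changed morphisms $m_k^*$. First I would record the correspondence between cocones and points of $P^*$: by the universal property of the colimit $(P^*,(i_a)_{a\in I_0})$ of $D_{A\lceil I}$ in $\AA^{op}$, the assignment $c\mapsto (c\circ i_a)_{a\in I_0}$ is a bijection from $hom(P^*,B)=B_{P^*}$ onto the set of cocones for $D_{A\lceil I}$ with vertex $B$; each such family is indeed a cocone, since $c\circ i_b\circ u=c\circ i_a$ whenever $u:a\to b$ is an arrow of $I$, and conversely every cocone factors uniquely through $P^*$.

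Second, and this is the heart of the matter, I would prove that if $(B,(f_a)_{a\in I_0})$ is the cocone attached to $c$ (so that $f_a=c\circ i_a$), then for each $k$
\[ f_{a_k}\in\exists_{m_k}B\iff c\in\exists_{m_k^*}B. \]
Here $m_k^*:P^*\to Q_k^*$ is by construction the pushout of $m_k:P_k\to Q_k$ along $i_{a_k}:P_k\to P^*$ (the change of basis in $\AA^{op}$ being a pushout of rings), so there is a structural map $j_k:Q_k\to Q_k^*$ with $j_k\circ m_k=m_k^*\circ i_{a_k}$. If $f_{a_k}\in\exists_{m_k}B$, say $d\circ m_k=f_{a_k}=c\circ i_{a_k}$ for some $d:Q_k\to B$, then $d$ and $c$ agree on $P_k$ and hence form a cocone on the pushout square, yielding a unique $e:Q_k^*\to B$ with $e\circ m_k^*=c$, so $c\in\exists_{m_k^*}B$. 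Conversely, if $e\circ m_k^*=c$ for some $e:Q_k^*\to B$, then $d=e\circ j_k$ satisfies $d\circ m_k=e\circ m_k^*\circ i_{a_k}=c\circ i_{a_k}=f_{a_k}$, so $f_{a_k}\in\exists_{m_k}B$.

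Finally I would assemble the pieces. By definition $B\mo[P^*,\{m_k^*\}]$ means $B_{P^*}=\bigcup_{k=1}^n\exists_{m_k^*}B$, and since the inclusion $\supseteq$ is automatic this amounts to saying that every $c\in B_{P^*}$ lies in some $\exists_{m_k^*}B$. Transporting this through the bijection of the first step and the equivalence of the second step, it says precisely that for every cocone $(B,(f_a)_{a\in I_0})$ there is some $k$ with $f_{a_k}\in\exists_{m_k}B$, that is, every cocone realises one of the $(m_k,a_k)$'s, which is the right-hand side.

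I expect the only genuinely delicate point to be the diagram chase in the second paragraph, together with getting the direction of the construction right: ``change of basis in $\AA^{op}$'' is a pushout of rings (a pullback in the dual category $\AA$), so $Q_k^*=P^*\otimes_{P_k}Q_k$, and it is exactly the universal property of this pushout that converts a solution of $m_k$ relative to the cocone into a solution of $m_k^*$ relative to $c$, and back. I would also remark in passing that, being finite colimits of finitely presented rings, $P^*$ and the $Q_k^*$ are again finitely presented, so that $[P^*,\{m_k^*\}]$ is a bona fide arithmetic axiom.
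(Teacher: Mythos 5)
Your proof is correct and follows essentially the same route as the paper's: both arguments rest on the bijection between points of $B_{P^*}$ and cocones for $D_{A\lceil I}$ given by the colimit's universal property, together with the pushout square $j_{a_k}m_k=m_k^*i_{a_k}$, used in one direction by composing with $j_{a_k}$ and in the other by invoking the pushout's universal property. Packaging these as a single biconditional $f_{a_k}\in\exists_{m_k}B\iff c\in\exists_{m_k^*}B$ rather than two separate implications is only a cosmetic difference.
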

  \begin{proof}
  It will be convenient to draw a picture of the change of basis for every $m_k$ as the following pushout :
  \begin{displaymath}
   \begin{CD}
    P_k @>m_k>> Q_k\\
    @Vi_{a_k}VV @VVj_{a_k}V\\
    P^* @>>m_k^*> Q_k^*.
   \end{CD}
  \end{displaymath}
  Suppose that $B\mo [P^*,\{m_k^*\}]$ and $(B,(g_a)_{a\in I_0})$ is a cocone for $I$. By definition of $P^*$, there is a unique $g:P^*\to B$ such that
  $gi_a=g_a$ for every $a\in I_0$. By hypothesis, as $g\in B_{P^*}$ there exists $k\in\{1,\ldots,n\}$ such that $g\in \exists_{m_k^*} B$, and then
  an element $b:Q_k^*\to B$ such that $bm_k^*=g$. We then have $bj_{a_k}m_k=bm_k^*i_{a_k}=gi_{a_k}=g_{a_k}$, which means that $g_{a_k}\in \exists_{m_k} B$,
  i.e. $(B,(g_a)_{I_0})$ realises $(m_k,a_k)$.\\
  Conversely, suppose that every cocone with vertex $B$ realises an element of $X$ and let $g\in B_{P^*}$ : $g$ induces a cocone $(B,(gi_a)_{a\in I_0})$ 
  so there is $k\in\{1,\ldots,n\}$ such that $gi_{a_k}\in \exists_{m_k} B$; in other words, we have a $b\in B_{Q_k}$ such that $bm_k=gi_{a_k}$. By
  hypothesis on $m_k^*$, there exists a $b^*:Q_k^*\to B$ such that $b^*m_k^*=g$, whence $g\in \exists_{m_k^*} B$ and $B\mo[P^*,\{m_k^*\}]$.
  \end{proof}
  
  The storage of finite information in arithmetic axioms through a change of basis could lead us to the study of ``consequences'' of an arithmetic
  theory. In first order logic one finds a semantic and a syntactic notion of consequence, their equivalence being the content of Gödel's
  \emph{completeness} theorem. Here the two notions are not far from each other, and we will simply say that an axiom $\phi$ is a \emph{consequence} of an
  arithmetic theory $\t$ if for every model $B$ of $\t$, we have $B\mo\phi$. If $\C$ is any subcategory of rings, the \emph{(arithmetic) theory of $\C$}
  will be the set $Th(\C)$ of axioms satisfied in every ring $B$ of $\C$, so clearly the set of consequences $\t^\vdash$ of $\t$ is $Th(\M(\t))$, and
  clearly $\C$ is arithmetic if and only if $\C=\M(Th(\C))$. For the reader interested in Grothendieck topoi, following for example \cite{SGL} (Chapter III)
   the axioms of $\t^\vdash$ are the finite co-covers of a Grothendieck topology $\ms T$ on $\AA$, and $\M(\t)$ is equivalent to the category of points of
  the coherent topos $Sh(\AA,\ms T)$. Conversely, the collection of finite co-covers of any coherent Grothendieck topology $\ms T$ on $\AA$ is an arithmetic
  theory $\t$ such that $\t=\t^\vdash$ and the category of points of $Sh(\AA,\ms T)$ is equivalent to $\M(\t)$.

  \begin{defi}
  The \emph{negative diagram of $A$} is the set of primitive morphisms with parameters in $A$ which are \emph{not} realised in $A$.
  \end{defi}
  
  \begin{thm}\label{CARARITH}
  $\C$ is the category of models of an arithmetic theory if and only if it is closed under ultraproducts and purely
  embedded subrings. In this case, it is also closed under filtered colimits.
  \end{thm}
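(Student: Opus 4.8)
The plan is to take as axiomatisation the \emph{theory of $\C$}, $\t = Th(\C)$, and to prove $\C = \M(Th(\C))$ under the two closure hypotheses; the inclusion $\C \subseteq \M(Th(\C))$ is immediate from the definition of $Th(\C)$. The statement then factors into two tasks: first, that any category of the form $\M(\t)$ is closed under ultraproducts, purely embedded subrings and filtered colimits (this yields the ``if'' direction and the final sentence at once); second, that a category closed under ultraproducts and pure subrings is recaptured as $\M(Th(\C))$ (the ``only if'' direction).

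For the closure properties I would argue axiom by axiom on a fixed $\chi = [P,F] \in \t$. For ultraproducts the crucial point is a \L o\'s-type lemma: since $P$ and $Q$ are finitely presented and the reduced product $B = \prod_\U A_i$ is a directed colimit, one has $B_P = \prod_\U (A_i)_P$ and $\exists_m B$ consists exactly of the classes $[(g_i)]$ with $\{i : g_i \in \exists_m A_i\} \in \U$. Granting this, each $g \in B_P$ lies in $\exists_m A_i$ for some $m \in F$ on a $\U$-large set, and the \emph{finiteness} of $F$ together with the fact that $\U$ is an ultrafilter lets one fix a single such $m$, so $g \in \exists_m B$ and $B \mo \chi$. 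For a pure embedding $f : A \to B$ with $B \mo \chi$, the defining property $f^{-1}(\exists_m B) = \exists_m A$ transports a witness $fa \in \exists_m B$ back to $a \in \exists_m A$, giving $A \mo \chi$. For a filtered colimit $A = \mathrm{colim}_j A_j$, finite presentation of $P$ gives $A_P = \mathrm{colim}_j (A_j)_P$, so any $a \in A_P$ descends to some $A_{j_0}$, where it is realised by some $m \in F$; pushing the witness forward along the colimit cocone realises it in $A$.

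For the converse, take $A \mo Th(\C)$; the aim is to produce a pure embedding of $A$ into an ultraproduct of rings of $\C$, after which closure under ultraproducts places that codomain in $\C$ and closure under pure subrings places $A$ in $\C$. I would apply the Finiteness Property~\ref{FPP} with $X = \emptyset$ and $Y$ the \emph{negative diagram} of $A$, i.e. all $(m,a)$ with $a \notin \exists_m A$. An $f$ avoiding all of $Y$ satisfies $f^{-1}(\exists_m B) \subseteq \exists_m A$ for every primitive $m$, while the reverse inclusion holds for any homomorphism; hence $f$ is pure (equivalently existentially closed, by~\ref{CARIM}), and applying purity to the equality morphism $e : \Z[x,y] \to \Z[x,y]/(x-y)$ shows $f$ injective, so $A$ sits as a purely embedded subring of $B$. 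It remains to verify the hypothesis of~\ref{FPP}: given a finite $Y_0 = \{(m_k,a_k)\}_{k=1}^n \subseteq Y$ and a finite subcategory $I$ of $\ms D_A$ containing the parameters, I must find a cocone for $D_{A\lceil I}$ with vertex in $\C$ avoiding $Y_0$.

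The hard part is exactly this verification, and it is where Proposition~\ref{DIAMOR} does the work. Suppose no such avoiding cocone exists; then every cocone for $I$ with vertex in any $B \in \C$ realises one of the $(m_k,a_k)$, so by~\ref{DIAMOR} every such $B$ models the single arithmetic axiom $[P^*,\{m_k^*\}]$ built from the colimit $P^*$ of $I$ in $\AA^{op}$ and the change-of-basis morphisms $m_k^*$. That axiom therefore lies in $Th(\C)$, whence $A \mo [P^*,\{m_k^*\}]$; applying~\ref{DIAMOR} in the other direction to the canonical cocone $(A,(a)_{I_0})$ forces some $a_k \in \exists_{m_k} A$, contradicting $(m_k,a_k) \in Y$. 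This contradiction establishes the hypothesis of~\ref{FPP}, completes the pure embedding, and thus yields $\M(Th(\C)) \subseteq \C$. Finally, closure under filtered colimits for the resulting arithmetic category is already covered by the first task, so the last sentence needs no separate argument.
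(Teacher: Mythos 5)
Your proposal is correct and follows essentially the same route as the paper's proof: the same \L o\'s-type argument with primality of the ultrafilter for closure under ultraproducts, and the same application of the Finiteness Property~\ref{FPP} to the negative diagram together with Proposition~\ref{DIAMOR} for the converse. The only (harmless) differences are that you phrase the verification of the hypothesis of~\ref{FPP} contrapositively and explicitly check injectivity of the pure continuation via the equality morphism, a point the paper leaves implicit.
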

  \begin{proof}
  Suppose that $\C=\M(\t)$ is arithmetic.
  First, if $\ms U$ is an ultrafilter of subsets of a set $I$ and $(A_i)_I$ a family of models of $\t$, we denote by $A$
  the ultraproduct $\prod_\ms U A_i$ and we show that $A$ is in $\C$ : we have to show that $A\mo\chi$ for every $\chi\in\t$. Writing
  $\chi=[P,\{m_1:P\to Q_1,\ldots,m_n:P\to Q_n\}]$, suppose that $a\in A_P$. As $A$ is a directed colimit of the products $\prod_X A_i$ for $X\in\ms U$ and $P$ is
  finitely presented, there exists $X\in \ms U$ and a family of points $(a_i:P\to A_i)_{i\in X}$, such that $\pi_X ((a_i)_X)=a$, if we denote by $\pi_X$ the transition morphism
  of the ultraproduct. As each $A_i$ is a model of $\chi$, for each $i\in X$ there exists $k_i\in \{1,\ldots,n\}$ such that $a_i\in \exists_{m_{k_i}} A_i$.
  In particular, if we note $X_k=\{i\in X:a_i\in \exists_{m_k} A_i\}$, we have $X=\bigcup_{k=1}^n X_k$ and as $\ms U$ is a prime filter, there is
  $k_0\in\{1,\ldots,n\}$ with $X_{k_0}\in \ms U$. For each $i\in X_{k_0}$, there exists $b_i:Q_{m_{k_0}}\to A_i$ with $b_im_{k_0}=a_i$. In particular,
  we have $a=\pi_{X_{k_0}} ((a_i)_{X_{k_0}})$ and if we set $b=\pi_{X_{k_0}} ((b_i)_{X_{k_0}})$, we have $bm_{k_0}=a$, whence $a\in \exists_{m_{k_0}} A$
  and $A$ is in $\C$, which is then closed under ultraproducts. Secondly, suppose that $i:A\into B$ is a pure embedding into a ring of $\C$. With the same notations, if $a\in A_P$ consider $ia:P\to B$.
  As $B$ is a model of $\chi$, there exists $k_0\in\{1,\ldots,n\}$ such that $ia\in \exists_{m_{k_0}} B$ and as $i$ is pure we have
  $a\in \exists_{m_{k_0}} A$, so $A$ is in $\C$, which is then closed under purely embedding subrings.\\
  Reciprocally, suppose that $\C$ is closed under ultraproducts and pure subrings and let $\t=Th(\C)$ : we will show that $\C$ is the category of models of
  $\t$, so we suppose that $A\mo\t$ and apply the finiteness property with $Y$ as the negative diagram of $A$ and $X=\emptyset$. Let $Y_0=\{(m_1,a_1),\ldots,(m_n,a_n)\}$ be a finite subset
  of $Y$ and $I$ a finite subcategory of $\ms D_A$ such that $a_k\in I_0$ for $k=1,\ldots,n$. Let $(P^*,(f_a)_{I_0})$ be a colimit of
  $D_{A\lceil I}$ in $\AA^{op}$ and $f:P^*\to A$ be the universal morphism corresponding to the natural cocone $(A,(a)_{I_0})$ for $D_{A\lceil I}$. For
  every $(m_k:P_k\to Q_k,a_k)\in Y_0$, we consider the change of basis $m_k\to m_k^*$ along $f_{a_k}$ : by definition of $Y_0$ and
  Proposition~\ref{DIAMOR}, the axiom $\chi=[P^*,\{m_k^*\}]$ cannot be true in $A$, so $\chi\notin \t$.
  By definition of $\t$, there exists a ring $B$ in $\C$ which is not a model of $\chi$, and this leads to a cocone for $D_{A\lceil I}$
  with vertex $B$ avoiding $Y_0$. By the finiteness property~\ref{FPP}, there is a continuation $f:A\to B$ into an ultraproduct of rings of $\C$
   and $f$ avoids $Y$ : this exactly means that $f$ is an immersion, so by hypothesis $A$ itself is in $\C$. Finally, we have $\C=\M(\t)$.\\
  The stability of $\C=\M(\t)$ under filtered colimits is straigthforward, using an argument similar to the case of ultraproducts.
  \end{proof}
  
  The ring $A$ is trivial if and only if it satisfies the axiom $[\Z,\Z\to \mb 1]$; otherwise it satisfies the axiom $[\mb 1,\emptyset]$, so
  if $\t$ is the set of all arithmetic axioms satisfied in $A$ (the ``arithmetic theory of $A$'', $Th(A)$), the fact of $A$ being trivial or not is ``encoded'' in
  $\t$. This means that even if the two notions of existential completeness and geometric closedness do not coincide in general, if we associate to
  $A$ the category $\C=\M(Th(A))$ and if $A$ is noetherian, either $A$ is trivial (and then e.c in $\C$ if and only if g.c in $\C)$,
  or $A$ is not trivial and $\C=\C^*$ : by Corollary~\ref{CORNOETH} it satisfies the Nullstellensatz relative to $\C$ if and only if is is
  existentially closed in $\C$. This could serve as a definition for a ``geometrically closed ring``, and at this point we would be able to apply to $A$
  the methods of \emph{positive model theory} of \cite{BYP}, compatible with the present setting; we will rather wait until the next section and adopt a
  slightly different definition. We show here in the spirit of section~\ref{SEC1} how to give a set of axioms for $U_\C$ when $\C=\M(\t)$ for a given
  $\t$, as this category was proven of geometric interest in Proposition~\ref{VARIRR}.

  \begin{defi}
  A formula $F$ will be called \emph{universal} if it consists of \emph{surjective} morphisms, and an axiom will be universal if its formula is. If $\t$ is an
  arithmetic theory, the \emph{universal theory of $\t$}, noted $\tu$, it the set of universal consequences of $\t$, i.e. of all universal
  axioms satisfied in every model of $\t$.
  \end{defi}

  \begin{prop}\label{CARUN}
  If $\C=\M(\t)$, then $U_\C$ is the category of models of $\tu$.
  \end{prop}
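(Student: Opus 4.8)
The plan is to prove the two inclusions $U_\C\subseteq\M(\tu)$ and $\M(\tu)\subseteq U_\C$ separately, the second resting on the Finiteness Property~\ref{FPP}.

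For the first inclusion, I would start from the observation that \emph{universal axioms are reflected by embeddings}. Let $\chi=[P,F]\in\tu$, with $F=\{n_j:P\onto R_j\}$ a formula of surjective morphisms, and let $i:B\into C$ be an embedding with $C\in\C$. Since $\tu\subseteq Th(\C)$ we have $C\mo\chi$. Given a point $a\in B_P$, applying $C\mo\chi$ to $ia$ yields a $j$ with $ia\in\exists_{n_j}C$; as $n_j$ is surjective this merely says $\ker n_j\subseteq\ker(ia)$, and $\ker(ia)=\ker a$ by injectivity of $i$, so $a\in\exists_{n_j}B$. Hence $B\mo\chi$, proving $U_\C\subseteq\M(\tu)$.

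For the converse, given $B\mo\tu$ I would produce an embedding of $B$ into a ring of $\C$ by realising the ``inequality with zero'' part of the negative diagram. Encode each element of $B$ as a point $b:\Z[x]\to B$, and let $z:\Z[x]\onto\Z$ be the surjection $x\mapsto 0$, so that $b\notin\exists_z B$ exactly when $b(x)\neq 0$; set $Y=\{(z,b):b(x)\neq 0\text{ in }B\}$ and $X=\emptyset$. A homomorphism avoids $Y$ if and only if it is injective, so it suffices to check the hypothesis of~\ref{FPP}: for every finite $Y_0=\{(z,b_1),\ldots,(z,b_n)\}$ and every finite subcategory $I$ of $\ms D_B$ containing the $b_k$, there is a cocone for $D_{B\lceil I}$ with vertex in $\C$ avoiding $Y_0$. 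The homomorphism $f:B\to A$ that~\ref{FPP} then delivers lands in an ultraproduct $A$ of rings of $\C$, which lies in $\C$ because $\C=\M(\t)$ is closed under ultraproducts by~\ref{CARARITH}; being injective, $f$ embeds $B$ into $A\in\C$, whence $B\in U_\C$.

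The crux is the verification of that hypothesis, which is where $B\mo\tu$ enters. Let $(P^*,(i_a)_{a\in I_0})$ be a colimit of $D_{B\lceil I}$ in $\AA^{op}$ and $f^*:P^*\to B$ the universal morphism with $f^*i_a=a$; write $\beta_k=i_{b_k}(x)\in P^*$. A cocone with vertex $C$ is a morphism $g:P^*\to C$, and it avoids $Y_0$ precisely when $g(\beta_k)\neq 0$ for all $k$. Suppose no such cocone exists for any $C\in\C$: then every $g:P^*\to C$ kills some $\beta_k$, i.e. $C\mo\chi$ for the universal axiom $\chi=[P^*,\{P^*\onto P^*/(\beta_k)\}_{k=1}^n]$. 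Thus $\chi\in Th(\C)$, and being universal $\chi\in\tu$; since $B\mo\tu$ we get $B\mo\chi$, so applying $\chi$ to $f^*$ gives some $k$ with $f^*(\beta_k)=0$. But $f^*(\beta_k)=f^*i_{b_k}(x)=b_k(x)\neq 0$ by choice of $Y_0$, a contradiction. This establishes the hypothesis of~\ref{FPP} and finishes the proof. I expect the only delicate point to be this translation of ``no cocone avoids $Y_0$'' into satisfaction of a single universal axiom, which is exactly the change-of-basis bookkeeping along the $i_{b_k}$ codified in Proposition~\ref{DIAMOR}.
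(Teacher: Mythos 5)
Your proof is correct and follows essentially the same route as the paper: universal axioms are reflected along embeddings (you argue via kernels where the paper argues via images, an immaterial difference), and the converse applies the Finiteness Property to a set $Y$ of avoided surjective morphisms with parameters, with the key step being the change-of-basis translation of ``no cocone avoids $Y_0$'' into a universal axiom as in Proposition~\ref{DIAMOR}. The only deviation is that you take $Y$ to consist solely of the ``$x\neq 0$'' instances rather than all avoided surjective primitive morphisms, which suffices since injectivity is all that is needed.
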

  \begin{proof}
  Suppose that $i:A\into B$ is an embedding into a model of $\t$ and $\chi=[P,F]\in \tu$. Let
  $h_P^i:A_P\into B_P, a\mapsto ia$ be the injective map induced by $i$ and $(m:P\to Q)\in F$. If $a\in \exists_{m} A$, there is a $b:Q\to A$
  with $bm=a$, so $ibm=ia$ and $ia\in \exists_{m} B$. Conversely, if $ia\in \exists_{m} B$, let $b\in B_Q$ be such that $bm=ia$ : we
  have $bm(P)=ia(P)$, i.e. $b(Q)\subseteq i(A)$ as $m$ is surjective. If $j:i(A)\to A$ is the inverse of $i$, we have $jb:Q\to A$ and $jbm=a$, whence $a\in \exists_m A$ : we
  have proved that $\exists_m A=(h_P^i)^{-1} \exists_m B$. As $B\mo\t$, we have $B\mo\chi$, i.e. $B_P=\bigcup_{m\in F} \exists_m B$. This implies
  $A_P=(h_P^i)^{-1} B_P= \bigcup_{m\in F} (h_P^i)^{-1}\exists_m B = \bigcup_F \exists_m A$, i.e. $A\mo\chi$. Finally, we have $A\mo\tu$.\\
  Reciprocally, suppose that $A\mo\tu$ and let $Y$ be the set of all surjective primitive morphisms with parameters in $A$ \emph{avoided} by $A$. First, if
  $f:A\to B$ is an $A$-algebra avoiding $Y$ and $a\neq b\in A$, then the pair $(\Z[x,y]\to \Z[x,y]/(x-y),e_{a,b})$ (where $e_{a,b}:\Z[x,y]\to A$ is the
  evaluation of $x$ and $y$ at $a$ and $b$) is in $Y$, whence $fa\neq fb$ and $f$ is an embedding. We then only have to show that such an $A$-algebra
  exists, with $B$ a model of $\t$, i.e. to apply the Finiteness Property~\ref{FPP} to $\C$ with $X=\emptyset$.
  Let then $Y_0=\{(m_1,a_1),\ldots,(m_n,a_n)\}$ be a finite subset of $Y$ and $I$ a finite subcategory of $\ms D_A$ containing the parameters of $Y_0$. By
  way of contradiction, if every cocone for $I$ with vertex a model of $\t$ realises one of the elements of $Y_0$, then by Proposition~\ref{DIAMOR} (with
  the same notations) every model of $\t$ is a model of $\chi=[P^*,\{m_k^*\}]$. Considering the change of basis $m_k\to m_k^*$ through tensor products, it
  is easy to see that each $m_k^*$ is surjective, so $\chi\in\tu$. However, the canonical cocone $(A,I_0)$ for $I$ avoids $Y_0$ by definition of $Y$, so
  $A$ cannot be a model of $\chi$, which contradicts the hypothesis that $A\mo\tu$. We conclude that the hypothesis of the finitess property is satisfied
  and we find an $A$-algebra $f:A\to B$ which avoids $Y$, $B$ being a model of $\t$ as an ultraproduct of such by the theorem, and the
  proof is complete.
  \end{proof}

   \begin{ex}
   \begin{enumerate}
   \item The arithmetic category $\C$ of real fields is axiomatised by $\t_f\cup\t_{rd}$, where $\t_{rd}=\t_{id}\cup
   \{[\forall x_1,\ldots,x_n\ (\sum_{i=1}^n x_i^2=0)\imp (x_1=0)] : n\in \N^* \}$ is the theory of \emph{real integral domains}
   \item The ``pre-real rings'' are the models of $\t_{pr}=\{[\forall x_1,\ldots,x_n\ (1+\sum_{i=1}^n x_i^2=0)\imp \bot] : n\in \N\}$ (the case $n=0$ says that
   the ring is not trivial).
   \item The properties of real closed fields may be encoded into an arithmetic theory $\t_{rcf}$, which is $\t_{rf}$ together with
   $[\forall x\ \top\imp\exists y(y^2-x=0)\vee \exists y (y^2+x=0)]$ and the set
   $\{[\forall x_1,\ldots,x_n\ \top\imp\exists y(y^n + x_1y^{n-1}+\ldots, y_n=0)] : n\in 2\N+1\}$.
   \end{enumerate}
  \end{ex}
  
  It may be shown with similar techniques (see~\cite{HD}, Section 8.1) that arithmetic special categories are axiomatised by the set
  $\t_H$ of their universal consequences of the form $[P,F]$, where $F$ is a singleton. Such axioms we call \emph{universal Horn} and such categories
   \emph{quasivarieties of rings}, following universal algebra. In particular, starting from an arithmetic category $\C=\M(\t)$ we have an
  axiomatisation of the special category $\sc$ by $\t_H$. We leave this aside in order not to weigh down the exposition, but notice that as the real
  radical of an ideal $I$ is characterised as $\sqrt[R] I=\{a\in A:\exists m\in \N, \exists b_1,\ldots,b_n\in A, a^{2m}+b_1^2+\ldots +b_n^2 \in I\}$
  (\cite{BCR}, Proposition 4.1.7), the quasivariety $S_{\M(\t_{rf})}=\M((\t_{rcf})_H)$ from the preceding example is axiomatised by the set
  $\{[\forall x,y_1,\ldots,y_n\ (x^{2m} + \sum_{i=1}^n b_i^2 =0)\imp (x=0)]: m,n\in \N\}$. In the same spirit, it is possible to
  show that in general the set $\thu$ of consequences of $\t$ of the form $[P,\emptyset]$ defines the category $\C^-$.

  This leads us to the following characterisation of $\C^+$, reminiscent of model theory's notion of ``model companion''.
  \begin{cor}\label{CAREC}
  If $\C=\M(\t)$ is arithmetic, then for every theory $\t'$ such that $\C\subset \D=\M(\t')\subset\C^-$ we have $\C^+=\D^+$.
  \end{cor}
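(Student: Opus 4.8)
The plan is to prove the two inclusions $\C^+\subseteq\D^+$ and $\D^+\subseteq\C^+$ separately, in each case using only the chain $\C\subseteq\D\subseteq\C^-$ and the resulting fact that every ring of $\D$ continues into $\C$.

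First I would isolate the elementary principle, already implicit in the proof of Corollary~\ref{CORNOETH}, that a \emph{left factor} of an existentially closed homomorphism is existentially closed: if $gf:A\to C$ is e.c. with $f:A\to B$ and $g:B\to C$, then $f$ is e.c. Indeed, for an ideal $I$ of finite type over $A$ with $\ms Z_f(I)\neq\emptyset$, the image under $g$ of a point of $\ms Z_f(I)$ lies in $\ms Z_{gf}(I)$, so $\ms Z_{gf}(I)\neq\emptyset$ and hence $\ms Z(I)\neq\emptyset$ by existential closedness of $gf$. With this in hand the inclusion $\C^+\subseteq\D^+$ is immediate: given $A\in\C^+$ we have $A\in\C\subseteq\D$, and for any $A$-algebra $f:A\to B$ with $B\in\D\subseteq\C^-$ I would choose a continuation $g:B\to C$ into some $C\in\C$; then $gf$ is e.c. because $A\in\C^+$, so $f$ is e.c. by the left factor principle, and therefore $A\in\D^+$.

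For the reverse inclusion $\D^+\subseteq\C^+$, let $A\in\D^+$; the only point requiring work is to show $A\in\C$. Since $A\in\D\subseteq\C^-$, I would pick a continuation $f:A\to C$ with $C\in\C\subseteq\D$. As $A\in\D^+$ and $C\in\D$, this $f$ is e.c., hence pure by Lemma~\ref{CARIM}. The key observation is that purity forces injectivity: testing purity against the equality primitive $e:\Z[x,y]\to\Z[x,y]/(x-y)$, for which $\exists_e A$ is exactly the diagonal of $A_{\Z[x,y]}=A^2$, gives $f(a)=f(a')\imp a=a'$. Thus $f$ is a \emph{pure embedding} of $A$ into a ring of $\C$, and since $\C=\M(\t)$ is closed under purely embedded subrings by Theorem~\ref{CARARITH}, we conclude $A\in\C$. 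Finally, any $A$-algebra with codomain in $\C\subseteq\D$ is e.c. because $A\in\D^+$, so $A\in\C^+$, and the two inclusions give $\C^+=\D^+$.

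The step I expect to be the crux is this passage from $A\in\D^+$ to $A\in\C$: the membership must be produced purely from the existence of an e.c. continuation, and the decisive idea is that such a continuation is not merely pure but, tested against the equality primitive, injective, so that $\C$ being closed under \emph{pure subrings} (and not merely pure homomorphisms) can be brought to bear.
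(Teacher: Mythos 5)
Your proof is correct and follows the same decomposition into the two inclusions as the paper's, with the same left-factor principle (a left factor of an e.c.\ homomorphism is e.c.) driving the inclusion $\C^+\subseteq\D^+$. The one genuine difference lies in the converse inclusion: the paper's proof merely observes that every continuation of $A\in\D^+$ into a ring of $\C$ lands in $\D$ and is therefore an immersion, and then concludes ``i.e.\ $A\in\C^+$'', leaving unjustified the membership $A\in\C$ that the definition of $\C^+$ requires. You isolate exactly this point and supply the missing argument: an e.c.\ continuation $f:A\to C$ with $C\in\C$ exists because $A\in\C^-$, it is pure by Lemma~\ref{CARIM}, purity tested against the equality primitive $e:\Z[x,y]\to\Z[x,y]/(x-y)$ forces injectivity, and the closure of the arithmetic category $\C$ under purely embedded subrings (Theorem~\ref{CARARITH}) then gives $A\in\C$. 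This is in fact the only place where the hypothesis that $\C$ is arithmetic is used, so your version makes visible why that hypothesis appears in the statement; apart from this welcome completion, the two arguments coincide.
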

  \begin{proof}
  First, notice that a ring in $\C^+$ is e.c in $\C^-$, so if $A$ is a ring of $\C^+$, then any homomorphism $f:A\to B$ with $B$ in $\D$ is e.c
  and we have $\C^+\subset\D^+$. Conversely, if $A\in \D^+_0$ and $f:A\to B$ is a continuation into a ring of $\C$, by hypothesis $B$ is in $\D$
  and $f$ is an immersion, i.e $A\in \C^+$.
  \end{proof}
   \begin{rem} Mimicking with arithmetic formulas the proof of Theorem 1 of \cite{BYP}, one may show that every ring
  of an arithmetic category $\C$ has an e.c continuation, which allows to strengthen the corollary by demanding only that $\C^+\subset \D \subset \C^-$.
  \end{rem}
  In particular, if we have an axiomatisation $\t$ of a category of rings $\C$ such that $\C=\C^*$ as well as a relative Nullstellensatz for $\C$, this
  automatically tells us that g.c rings of $\C$ are the e.c models of any weaker arithmetic theory containing $\thu$ and provides a systematic way of
  proving pseudo model completeness in this context, without relying on formal model theoretical knowledge. The real Nullstellensatz (\cite{BCR}, 4.1.4) is
  an example and we recover Corollary~\ref{RCF} as a special case. This example leads us to the study of p.m.c categories which also are arithmetic, and to
  the notion of a ``geometrically closed ring''.

\section{Geometrically closed rings}\label{SEC4}

  If $p$ is a prime number, let us write $v_p$ the extension of the $p$-adic valuation on $\mb Q$ to $\mb Q_p$.
  In this last field, the relation $v_p(x)\leq v_p(y)$ is the ``$p$-adic divisibility'' $x|_p y$, which may alternatively be defined as the unique binary
  relation on $\mb Q_p$ satisfying a certain (finite) list of properties (see \cite{PR}, p.6). Following E.Robinson in~\cite{ER}, in analogy with $\mb R$
  and real closed fields this relation may be defined in $\mb Q_p$ by a primitive morphism $d_p:\Z[x,y]\to \Z[x,y,z]/(f_p(x,y,z))$,
  where $f_2(x,y,z)=z^3(x^3+2y^3)-1$ and $f_p(x,y,z)=z^2(x^2+py^2)-1$ for $p\neq 2$. Remember that a field $k$ is \emph{formally $p$-adic} if it admits a
  $p$-valuation, i.e. a valuation $v_p$ such that $v_p(p)$ is the smallest positive element of the value group and the residue field has $p$ elements. Such
  a field is \emph{$p$-adically closed} when as in $\mb Q_p$ the preceding $d_p$ induces a unique $p$-valuation $v$ such that $(k,v)$ has no proper algebraic
  $p$-valued extension (from \cite{PR}, 1).
  A (geometric) axiomatisation of $p$-adically closed fields was given in~(\cite{ER}, 1), which may be translated into an arithmetic theory $\t_{pcf}$.
  Applying Corollary 7.10 of \cite{PR} to the $\C$-radical for $\C=\M(\t_{pcf})$ and the discussion following Corollary~\ref{CAREC}, the $p$-adically closed
  fields are the e.c formally $p$-adic fields. In analogy with real closed fields, we will show that the ``open'' relation $v_p(x)<v_p(y)$ is
  definable by a \emph{finite set} of primitive morphisms, using the characterisation of pseudo model complete categories which are also arithmetic. 
  Throughout this section, $\t$ is an arithmetic theory.
  
  \begin{defi}\label{DEFZERO}
  Let $U(A)=\bigcup_{P\in \AA_0} A_P$.
  The collection of \emph{arithmetic sets of $A$} is the smallest subset $Ar(A)$ of $\ms P(U(A))$ such that for every primitive morphism $m:P\to Q$ : 
   \begin{enumerate}
    \item $A_P\in Ar(A)$ and we note $Ar_P(A)=Ar(A)\cap \ms P(A_P)$
    \item For $X\in Ar_Q(A)$, $A_m(X)\in Ar_P(A)$
    \item For $X,Y\in Ar_P(A)$, $X\cap Y, X\cup Y\in Ar_P(A)$
   \end{enumerate}
  We will say that $\t$ is \emph{(positively) model complete} if $\M(\t)$ is pseudo model complete.
  \end{defi}
  If $F$ is an arithmetic formula on the f.p ring $Q$, it ``defines'' a subset of $A_Q$, namely $A_F=\bigcup_{n\in F}\exists_n A$. Consider a primitive
  morphism $m:P\to Q$ : we have $A_m(A_F)=A_m (\bigcup_{n\in F}\exists_n A )= \bigcup_{n\in F} \exists_{nm} A$, hence $A_m(A_F)$ is ``defined'' by the
  formula $m^*F=\{nm:n\in F\}$. If $G$ is another formula on $Q$, it is not difficult to see that $A_F\cap A_G$ is defined in $A_Q$ by the formula
  $F\otimes G=\{m\otimes n : Q\to R\otimes S | (m,n)\in F\times G\}$ (where $m\otimes n :Q\to R\otimes S$ are induced by the tensor product). This means that the arithmetic
  subsets of $A_Q$ are the subsets ``defined'' by an arithmetic formula.
  
  \begin{thm}\label{CARPMC}
  Let $\C=\M(\t)$. The following properties are equivalent :
  \begin{enumerate}
   \item $\t$ is model complete
   \item For every primitive morphism $m:P\to Q$, there exists a finite subset $X$ of $Res_\C [m]$ such that for every model $B$ of $\t$, $B_P-\exists_m B
   = \bigcup_{n\in X} \exists_n B$.
  \end{enumerate}
  In other words, an arithmetic category is pseudo model complete if and only if for every $P$, $Ar_P(A)$ is the Boolean algebra of clopen sets of a compact
  topology.
  \end{thm}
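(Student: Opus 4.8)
The plan is to prove the equivalence (1)$\iff$(2) using the machinery already developed, principally Lemma~\ref{RESCOMP} and the Finiteness Property~\ref{FPP}, the essential new content being that one can pass from an arbitrary cover of the complement of $\exists_m B$ by resultants to a \emph{finite} subcover, uniformly across all models of $\t$. For the direction (2)$\imp$(1), I would argue that (2) directly gives, for each primitive $m$ and each model $B$ of $\t$, the equation $B_P - \exists_m B = \bigcup_{n\in X}\exists_n B$ with $X\subseteq Res_\C[m]$ finite; since $X\subseteq Res_\C[m]$ means $\exists_n B\cap\exists_m B=\emptyset$ for each $n\in X$, this exhibits $B_P-\exists_m B$ as a union of sets in $Res_\C[m]$. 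By Lemma~\ref{RESCOMP} (applicable since $\C=\M(\t)$ is closed under ultraproducts by Theorem~\ref{CARARITH}), every $B$ in $\C$ is then existentially closed in $\C$, so $\C=\C^+$, which is exactly pseudo model completeness.

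The harder direction is (1)$\imp$(2). Assuming $\t$ is model complete, Lemma~\ref{RESCOMP} gives, for each model $B$ and each primitive $m:P\to Q$, that $B_P-\exists_m B = \bigcup\{\exists_n B : n\in Res_\C[m]\}$, but a priori the resultant $Res_\C[m]$ is an infinite set and the cover depends on $B$. First I would fix a primitive $m:P\to Q$ and seek a single finite $X\subseteq Res_\C[m]$ working for all models simultaneously. The natural strategy is a compactness/contradiction argument via the Finiteness Property. Suppose no finite $X\subseteq Res_\C[m]$ works uniformly; then for every finite $X$ there is a model $B_X$ of $\t$ and a point $a_X\in (B_X)_P$ with $a_X\notin\exists_m B_X$ yet $a_X\notin\exists_n B_X$ for all $n\in X$. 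I would then set up an instance of Theorem~\ref{FPP}: take the ``avoid'' set $Y$ to consist of the pair $(m,a)$ together with all pairs $(n,a)$ for $n\in Res_\C[m]$, over a fresh generic parameter $a$ adjoined to the domain $P$, and the ``realise'' set $X=\emptyset$. The hypothesis of the previous paragraph furnishes, for each finite subconfiguration, a model of $\t$ witnessing simultaneous avoidance; Theorem~\ref{FPP} then produces a homomorphism $f$ into an ultraproduct $B$ of models of $\t$ avoiding all of $Y$. Since $\C$ is closed under ultraproducts, $B\in\C$, and in $B$ the point $fa$ lies outside $\exists_m B$ and outside every $\exists_n B$ with $n\in Res_\C[m]$, contradicting the resultant equation $B_P-\exists_m B=\bigcup\{\exists_n B:n\in Res_\C[m]\}$ guaranteed by Lemma~\ref{RESCOMP} under model completeness.

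The main obstacle I anticipate is organising this compactness argument cleanly at the level of the categories $\AA^{op}$ and $\ms D_A$: the parameter $a$ must be treated as a free generic element (realised over the colimit of the relevant finite subdiagram), and one must check that the resultant condition ``$n\in Res_\C[m]$'' is stable under the change of basis used in assembling the diagram, so that avoidance in the ultraproduct genuinely contradicts the resultant equation. Care is also needed to confirm that membership $n\in Res_\C[m]$, defined by $\exists_m B\cap\exists_n B=\emptyset$ \emph{in every} $B\in\C$, is preserved when passing to ultraproducts, which again follows from closure of $\C$ under ultraproducts but should be stated explicitly.

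Finally, for the closing reformulation, I would interpret the finite resultant $X$ topologically. Each basic arithmetic set $\exists_m B$ on $B_P$, and its complement $\bigcup_{n\in X}\exists_n B$, are then simultaneously of the same form, so condition (2) says precisely that every set of the shape $\exists_m B$ is clopen in the topology on $B_P$ whose closed sets are the arithmetic sets; finiteness of $X$ is exactly a quasi-compactness statement, and the arithmetic sets $Ar_P(A)$, being closed under the Boolean operations by Definition~\ref{DEFZERO}(3) and under complementation by (2), form the Boolean algebra of clopens of a compact topology. I would remark that this reformulation is immediate once (1)$\iff$(2) is established, amounting to unwinding the definitions of $Ar_P(A)$ and reading the uniform finite cover as compactness.
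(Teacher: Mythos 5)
Your proposal is correct and follows essentially the same route as the paper: for $(1)\imp(2)$ both arguments apply the Finiteness Property~\ref{FPP} to the generic point $1_P\in P_P$ with the avoid-set $\{(m,1_P)\}\cup\{(n,1_P):n\in Res_\C[m]\}$ and derive a contradiction with Lemma~\ref{RESCOMP} (the paper phrases this via the contrapositive of~\ref{FPP} and packages the resulting finite subset as an axiom using Proposition~\ref{DIAMOR}, while you argue by contradiction, which is the same compactness argument), and $(2)\imp(1)$ is the same easy purity/resultant check in both. The concerns you flag (genericity of the parameter, stability under change of basis, behaviour of resultants in ultraproducts) are handled in the paper exactly as you anticipate, by reducing to the subcategory generated by $1_P$ and by closure of $\C$ under ultraproducts.
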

  \begin{proof}
  $(1)\imp(2)$ Consider the set of morphisms with parameters $Y=\{(m,1_P)\}\cup\{(n:P\to R_n,1_P) : n\in Res_\C [m]\}$.
  If $f:P\to B$ is a continuation with $B\mo\t$ avoiding the whole $Y$, by hypothesis $B$ is an e.c ring of $\C$ in which
  $\bigcup_{n\in Res_\C [m]}\exists_n B \neq B_Q - \exists_m B$, contradicting Lemma~\ref{RESCOMP}. This means that such a continuation does not exist,
  and as $\C$ is closed under ultraproducts by Theorem~\ref{CARARITH}, by the finiteness property~\ref{FPP} there is a finite subset $Y_0$ of $Y$ and a
  finite subcategory $I$ of $\ms D_P$ such that $1_P\in I_0$ and no cocone for $\ms D_{P\lceil I}$ with vertex in $\C$ avoids $Y_0$. We may suppose that
  $I$ is generated by $P$ and $(m,1_P)\in Y_0$, so by Proposition~\ref{DIAMOR} we have $\t\mo [P,Y_0]$, whence in every model $A$ of $\t$ we have
  $A_P-\exists_m A =\bigcup_{n\in Y_0-\{m\}} \exists_n A$ and $X=Y_0-\{m\}$ is the desired finite subset of $Res_\C[m]$.\\
  $(2)\imp(1)$ Let $f:B\to C$ be an homomorphism in $\C$, $m:P\to Q$ primitive and $b\in B_P$ such that $fb\in \exists_m C$. By hypothesis let $X$ be a
  finite subset of $Res_\C [m]$ defining the complement of $m$ modulo $\C$. For every $n\in X$ we have $fb\notin\exists_n C$, and as $f$ is an homomorphism
  we get $b\notin\exists_n B$, whence $b\in\exists_m B$ because $B\in \C$. This shows that $f$ is existentially closed, which means that $\C$ is pseudo
  model complete, i.e $\t$ is model complete.
  \end{proof}

  If $\C$ is arithmetic, so is $\C^*$, and if $\C$ is p.m.c, so is $\C^*$. By the preceding characterisation and Corollary~\ref{CARGEC},
  if $\t$ is model complete and $A$ is in $\C=\M(\t)$, either $A$ is trivial (in which case it is geometrically closed in $\C$), or $A$
  is geometrically closed in $\C^*$. If now $\t=Th(A)$, as precedently discussed the information on the triviality of $A$ is encoded in $\t$,
  so $A$ is existentially closed in $\C$ if and only if it is geometrically closed in $\C$ in every case. 

  \begin{defi}
  A ring $A$ is \emph{geometrically closed} if its arithmetic theory $\t=Th(A))$ is model complete, equivalently
   if $A$ is a model of a certain model complete arithmetic theory.
  \end{defi}
  
  The second clause of the definition takes care of a.c fields : indeed, the model completeness of $\t_{acf}$ follows at once from Theorem~\ref{CAC},
  but the theory of a given a.c field encodes its characteristic, while $\t_{acf}$ does not. Every real closed field and every $p$-adically closed
  field is geometrically closed as well. We recall that a subset $X$ of an $A_P$ is
  \emph{constructible} if it is a Boolean combination of affine varieties. Chevalley's theorem states that in an
  a.c field the image of a constructible set under a regular morphism is again constructible. In such a field, inequations themselves
  are the projections of affine varieties (for $P(X)\neq 0$, take the projection along the $y$-axis of $\ms Z(P(X)y-1)$), which means that
  every subset $X$ of $U(A)$ obtained by boolean combinations and projections of affine varieties, may be obtained by
  projections (and unions) of affine varieties. We will establish this property in any geometrically closed ring.

  \begin{defi}
  The collection of \emph{definable sets with parameters in $A$}  is the smallest subset $Def(A)$ of $\ms P(U(A))$ such that for every primitive
  morphism $m:P\to Q$, $Def_P(A)=Def(A)\cap \ms P(A_P)$ is a Boolean subalgebra of $\ms P(A_P)$ and for every $X\in Def_Q(A)$, $A_m(X)\in Def_P(A)$.\\
  If, as in Definition~\ref{DEFZERO}, we omit the complements, we get the collection $Def^+(A)$ of \emph{positively definable sets} of $A$.
  \end{defi}

  \begin{lem}
  If $m:P\to R$, $n:Q\to R$ are primitive, $X\in Ar_P(A)$ and $Y\in Ar_Q(A)$, then the fibered product
  $X*_RY=\{a\otimes b \in A_{P\otimes_R Q} : a\in X \ \& \ b\in Y\}$ is arithmetic. In particular,   the set $A_m^{-1}(X)$ is arithmetic.
  \end{lem}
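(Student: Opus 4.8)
The plan is to reduce the whole statement to the single fact that \emph{preimages of arithmetic sets under the maps induced by primitive morphisms are again arithmetic}, and then to obtain that fact by structural induction on the construction of $Ar(A)$. The two reductions are elementary. The tensor product $P\otimes_R Q$ is a pushout in the finitely cocomplete category $\AA^{op}$, hence again finitely presented, and comes with its canonical injections $i:P\to P\otimes_R Q$ and $j:Q\to P\otimes_R Q$ from the two factors; these are primitive morphisms. Since $hom(-,A)$ is left exact, it turns this pushout into a pullback of sets, giving a bijection $A_{P\otimes_R Q}\cong A_P\times_{A_R}A_Q$ under which $A_i$ and $A_j$ are the two projections. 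A point $s$ equals $a\otimes b$ exactly when $A_i(s)=a$ and $A_j(s)=b$, so directly $X*_RY=A_i^{-1}(X)\cap A_j^{-1}(Y)$. The final clause is literally a preimage. Thus, granting the preimage claim, $X*_RY$ is arithmetic by clause (3) of Definition~\ref{DEFZERO} (closure under finite intersection), and $A_m^{-1}(X)$ is arithmetic on the nose.

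It remains to prove the preimage claim, which is the real content. I would call $X\in Ar_P(A)$ \emph{good} if $A_h^{-1}(X)\in Ar(A)$ for every primitive morphism $h$ of domain $P$, and show that the good sets satisfy clauses (1), (2), (3) defining $Ar(A)$; by minimality of $Ar(A)$ every arithmetic set is then good. For clause (1): for $h:P\to R$ one has $A_h^{-1}(A_P)=A_R\in Ar(A)$, so $A_P$ is good. For clause (3): preimages commute with finite unions and intersections, so if $X,Y$ are good then $A_h^{-1}(X\cup Y)=A_h^{-1}(X)\cup A_h^{-1}(Y)$ and likewise for $\cap$ are arithmetic, and $X\cup Y$, $X\cap Y$ are good.

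The crucial case is clause (2), the image case. Given a good $X\in Ar_Q(A)$, a primitive $m:P\to Q$ and an arbitrary primitive $h:P\to R'$ of domain $P$, I would form the pushout of $m$ and $h$, with canonical maps $m':R'\to Q\otimes_P R'$ and $h':Q\to Q\otimes_P R'$ (all finitely presented and primitive), and verify the base-change identity
\[
A_h^{-1}\bigl(A_m(X)\bigr)=A_{m'}\bigl(A_{h'}^{-1}(X)\bigr)
\]
by a direct computation with points: a point of $Q\otimes_P R'$ is, by left exactness of $hom(-,A)$, precisely a pair $(z,c)\in A_Q\times A_{R'}$ with $zm=ch$ in $A_P$, and both sides then unfold to $\{\,c\in A_{R'}:\exists z\in X,\ zm=ch\,\}$. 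Since $X$ is good, $A_{h'}^{-1}(X)$ is arithmetic; applying $A_{m'}$ keeps it arithmetic by clause (2) for $Ar(A)$. Hence $A_m(X)$ is good.

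The main obstacle is exactly this image case: one must realise the pullback of a \emph{projected} (existentially quantified) set as the \emph{projection} of a pullback, and this is precisely where the tensor product over $R$ and the left-exactness of $hom(-,A)$ must be invoked — $Ar(A)$ is closed under images and under finite Boolean operations on a fixed fibre, but closure under preimages is not among its defining clauses and has to be extracted. Once the base-change identity is established, the three clauses hold for the good sets, the induction closes, and both assertions of the lemma follow.
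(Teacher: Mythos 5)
Your argument is correct and is essentially the paper's own proof in a slightly different packaging: the paper likewise reduces $X*_RY$ to the intersection of the two one-factor cases ($X*_RY=(X*_RA_Q)\cap(A_P*_RY)$, which is your $A_i^{-1}(X)\cap A_j^{-1}(Y)$) and then inducts on the construction of the arithmetic set, disposing of the image clause by the identity $A_l(Z)*_RA_Q=A_u(Z*_RA_Q)$ for the map $u$ induced between the tensor products --- exactly your base-change identity $A_h^{-1}(A_m(X))=A_{m'}(A_{h'}^{-1}(X))$ specialised to $h$ a canonical injection into a pushout. The only cosmetic difference is that you isolate closure of $Ar(A)$ under arbitrary preimages $A_h^{-1}$ as the inductive statement, which has the small advantage of yielding the ``in particular'' clause directly.
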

  \begin{proof}
  We first suppose that $Y=A_Q$. If $X=A_P$, $X*_R A_Q$ is the whole $A_{P\otimes_ R Q}$, which is arithmetic, while the clauses for
  $\cap$ and $\cup$ obviously preserve the property. If $X=A_l(Z)$, where $l:P\to S$ and $Z\in Ar_S(A)$, suppose that $Z*_R A_Q$ is arithmetic.
  We have $X*_R A_Q=A_l(Z)*_R A_Q=A_u (Z*_R A_Q)$, where $u:S\otimes_R Q\to P\otimes_R Q$ is universal, and this takes care
  of projections : $X*_R A_Q$ is arithmetic for every $X\in Ar_P(A)$. In general, we have $X*_R Y= X*_R A_Q \cap A_P*_R Y$, and this last is arithmetic as
  an intersection, if $X$ and $Y$ are.
  \end{proof}

  The functor $hom(-,A)$, being left exact on $\AA$, turns tensor products into fibered products : the preceding lemma extends this to arithmetic sets, and
  if we forget $R$ we get an inside product $X*Y=X*_\Z Y$. If $U\in Ar_P(A)$ and $V\in Ar_Q(A)$, a map $\phi:U\to V$ will be \emph{arithmetic} if its
  virtual
  graph $Gr^*(\phi)=\{a\otimes b\in U*V : b=\phi(a)\}$ is. For such a map, if $b\in V$ we have
  $\phi^{-1}(b)=A_p(Gr^*(\phi)\cap A_q^{-1}(b))$, where $p,q:P,Q\to P\otimes Q$. As $A_q^{-1}(b)$ is an affine variety (see the proof of Lemma~\ref{CARIM}) and every arithmetic set is
  obviously positively definable, any set of the form $A_p(Y\cap A_q^{-1}(b))$, where $Y\subseteq A_{P\otimes Q}$ is arithmetic, is positively definable
  and the fiber of an arithmetic map. In model theory definable sets with parameters are obtained by adding parameters to basic definable sets, which in turn are defined by
  first order formulas. Here we started from primitive ring homomorphisms, which does not allow us to speak of boolean combination of arithmetic formulas,
  but we have to check that our definable sets with paramaters are also obtained by adding parameters in a certain sense.

  \begin{lem}
  A subset $X$ of $A_P$ is positively definable if and only if there exists an f.p ring $Q$, an arithmetic subset $Y$ of $A_{P\otimes Q}$ and a
  parameter $b\in A_Q$ such that $X=\{a\in A_P : a\otimes b \in Y\}$.
  \end{lem}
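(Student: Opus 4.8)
The plan is to separate the two implications, the forward one carrying all the content. Throughout I identify $A_{P\otimes Q}$ with $A_P\times A_Q$ via the left exactness of $hom(-,A)$, writing a point as $a\otimes b$ and letting the coproduct inclusions $p:P\to P\otimes Q$ and $q:Q\to P\otimes Q$ induce the two projections $A_p$ and $A_q$. For the ``if'' part I would only record the set-theoretic identity $\{a\in A_P:a\otimes b\in Y\}=A_p\big(Y\cap A_q^{-1}(b)\big)$, valid because $A_q^{-1}(b)=\{a\otimes b:a\in A_P\}$ and $A_p(a\otimes b)=a$; sets of this precise form were shown to be positively definable in the paragraph preceding the statement, so nothing else is needed here.

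For the ``only if'' part I would introduce the class $\mc G$ of all subsets of some $A_P$ that can be written as $\{a\in A_P:a\otimes b\in Y\}$ with $Q$ an f.p.\ ring, $Y\in Ar_{P\otimes Q}(A)$ and $b\in A_Q$, and show $Def^+(A)\subseteq\mc G$ by checking that $\mc G$ contains the positively definable ``building blocks'' and is closed under the operations generating $Def^+(A)$; minimality then gives the inclusion. A full space $A_P$ lies in $\mc G$ (take $Q=\Z$, $Y=A_P$), and so does any affine variety $\ms Z(p_1,\dots,p_k)\subseteq A_P$ with coefficients in $A$: promoting those coefficients to fresh variables $\ov z$ produces a generic, integer-coefficient (hence arithmetic) variety $\tilde Y\subseteq A_{P\otimes\Z[\ov z]}$, and specialising the parameter $b\in A_{\Z[\ov z]}$ to the actual coefficients recovers the given variety as the fiber $\{a:a\otimes b\in\tilde Y\}$.

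The stability checks are where the two closure properties of arithmetic sets intervene. Given $X_i=\{a:a\otimes b_i\in Y_i\}$ with $Y_i\in Ar_{P\otimes Q_i}(A)$ for $i=1,2$, I would move to the common space $A_{P\otimes Q_1\otimes Q_2}$, pull $Y_1$ and $Y_2$ back along the coordinate projections---still arithmetic by the fibered-product lemma, which grants that $A_m^{-1}$ of an arithmetic set is arithmetic---combine them by $\cup$ or $\cap$, and exhibit $X_1\cup X_2$ and $X_1\cap X_2$ as fibers over the single parameter $b_1\otimes b_2$. For a direct image $A_m$ along $m:P\to P'$ applied to $X=\{c:c\otimes b\in Y\}\subseteq A_{P'}$, I would verify $A_m(X)=\{a:a\otimes b\in A_{m\otimes 1_Q}(Y)\}$, the set $A_{m\otimes 1_Q}(Y)$ being arithmetic by clause (2) of Definition~\ref{DEFZERO} and the parameter staying $b$. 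The only real obstacle is bookkeeping: arranging every operation to be carried out with a single parameter tuple, obtained by enlarging $Q$ through tensor products, without ever leaving the arithmetic witnesses---precisely what stability of $Ar(A)$ under preimages (for the Boolean combinations) and under direct images (for the projections) is there to secure.
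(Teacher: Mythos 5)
Your proof is correct and follows essentially the same route as the paper's: the ``if'' direction via the identity $\{a\in A_P:a\otimes b\in Y\}=A_p\bigl(Y\cap A_q^{-1}(b)\bigr)$ and the preceding discussion, and the ``only if'' direction by induction on the generation of $Def^+(A)$, where your generic-coefficient variety is exactly the paper's coequaliser set $\exists_e A$ and your $m\otimes 1_Q$ is the paper's universal morphism $u$. Your formula for $X_1\cap X_2$ as a fiber over $b_1\otimes b_2$ of an intersection of pullbacks along the coordinate projections is in fact a cleaner (and more carefully stated) version of the corresponding step in the paper.
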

  \begin{proof}
  For $P$, $Q$, $Y$ and $b$ as in the statement, the set $\{a\in A_P : a\otimes b \in Y\}=A_p(Y\cap A_q^{-1}(b))$ is in $Def_P^+(A)$ by
  the preceding discussion, so we only need to show that positively definable sets have this form. If $V$ is an affine variety of $A_P$, there is a
  primitive $m:Q\to P$ and a parameter $b\in A_Q$ such that $V=A_m^{-1}(b)$. Consider the canonical morphisms $p:P\to P\otimes Q$ and $q:Q\to P\otimes Q$
  : we have two parallel arrows $q,pm:Q\to P\otimes Q$, of which we may find a primitive coequaliser $e:P\otimes Q\to E$, and it is easy to check that
  $V=\{a\in A_P : a\otimes b\in \exists_e A\}$; as $\exists_e A$ is arithmetic, every affine variety has the desired form. Suppose that $X_i=\{a\in A_P:
  a\otimes b_i\in Y_i\}$ for $Y_i\in Ar_{P\otimes Q_i} (A)$ and $b_i\in A_{Q_i}$, $i=1,2$. Write $b_1\otimes b_2\in A_{Q_1\otimes Q_2}$, $p:P\to P\otimes
  (Q_1\otimes Q_2)$, and $k_i:Q_i\to P\otimes(Q_1\otimes Q_2)$, $q_i:Q_i\to P\otimes Q_i$ the canonical morphisms. We have
  $X_1\cap X_2=\{a\in A_P: a\otimes(b_1\otimes b_2) \in A_{k_1}^{-1} A_{q_1}(Y_1) \cap A_{k_2}^{-1} A_{q_2}(Y_2)\}$ and similarly for $X_1\cup X_2$,
  whence these two subsets of $A_P$ have the desired form by the preceding lemma. Finally, if $m:P\to Q$ is primitive, $X=A_m(Y)$ and
  $Y=\{b\in A_Q: b\otimes c\in Z\}$, for $Z\in Ar_{Q\otimes R}(A)$ and $c\in A_R$, consider the canonical morphisms $q:Q\to Q\otimes R$, $r:R\to Q\otimes R$, $p:P\to P\otimes R$ and $\rho:R\to P\otimes R$. There exists a unique $u:P\otimes R
  \to Q\otimes R$ such that $up=qm$ and $u\rho=r$, and it may be checked that $X=\{a\in A_P : a\otimes c\in A_u(Z)\}$, whence the property is closed
  under projections, because $A_u(Z)$ is arithmetic. 
  \end{proof}

  \begin{cor}
  If $A$ is geometrically closed, then every definable set of $A$ is positively definable.
  \end{cor}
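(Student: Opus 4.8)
The plan is to prove the equality $Def(A)=Def^+(A)$; since every positively definable set is definable, i.e. $Def^+(A)\subseteq Def(A)$, only the reverse inclusion needs argument. Because $Def(A)$ is the \emph{smallest} collection whose fibres $Def_P(A)$ are Boolean subalgebras and which is closed under projections, and since $Def^+(A)$ already contains each $A_P$ and $\emptyset$ and is closed under $\cap$, $\cup$ and projections, it suffices to show that $Def^+(A)$ is closed under complementation. Invoking the preceding lemma I would write a positively definable $X\subseteq A_P$ as $X=\{a\in A_P : a\otimes b\in Y\}$ for some arithmetic $Y\in Ar_{P\otimes Q}(A)$ and parameter $b\in A_Q$; then $A_P-X=\{a\in A_P : a\otimes b\in A_{P\otimes Q}-Y\}$, so by that same lemma $A_P-X$ will be positively definable as soon as the complement $A_{P\otimes Q}-Y$ of the arithmetic set $Y$ is itself arithmetic. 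This reduces everything to the single claim that, when $A$ is geometrically closed, the complement of an arithmetic set is arithmetic.

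The key step is a normal form: I would first show that $Ar_P(A)$ consists exactly of the \emph{finite unions} of basic sets $\exists_m A=A_m(A_Q)$, with $m:P\to Q$ primitive. The generating clauses are checked directly (assuming $A$ nontrivial, the trivial case being immediate). We have $A_P=\exists_{1_P}A$, and $\emptyset=A_m(A_{\mb 1})$ for the terminal map $m:P\to\mb 1$, since $A_{\mb 1}=\emptyset$. Projections compose contravariantly, $A_m(\exists_e A)=\exists_{e\circ m}A$, and commute with unions, so projecting a finite union of basic sets yields one again. Finally a finite intersection of basic sets is basic: for $m:P\to Q$ and $n:P\to R$ the pushout coprojection $p:P\to Q\otimes_P R$ satisfies $\exists_m A\cap\exists_n A=\exists_p A$ by the universal property of the tensor product, and distributing intersections over unions keeps us inside the finite unions of basic sets.

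With this normal form in hand I would bring in model completeness. Since $A$ is geometrically closed its theory $\t=Th(A)$ is model complete, so $\C=\M(\t)$ is pseudo model complete and $A$ is one of its existentially closed members. Applying Theorem~\ref{CARPMC}(2) to the model $A$ yields, for every primitive $m:P\to Q$, a finite $X\subseteq Res_\C [m]$ with $A_P-\exists_m A=\bigcup_{n\in X}\exists_n A$, which is arithmetic. Consequently the complement of a general arithmetic set $\bigcup_i\exists_{m_i}A$ is the intersection $\bigcap_i(A_P-\exists_{m_i}A)$ of finitely many arithmetic sets, hence arithmetic by closure under $\cap$. Feeding this back through the preceding lemma shows $Def^+(A)$ closed under complementation, so each $Def^+_P(A)$ is a Boolean subalgebra; since $Def^+(A)$ is also closed under projections it satisfies the defining closure properties of $Def(A)$, and minimality gives $Def(A)\subseteq Def^+(A)$, whence $Def(A)=Def^+(A)$. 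The one genuine obstacle is exactly that complementing a projection is an intrinsically ``universal'' operation that positive (existential) data cannot express directly; it is the conjunction of the normal form with Theorem~\ref{CARPMC}—model completeness supplying an existential description of the complement of each basic projection—that converts this universal operation back into positive form.
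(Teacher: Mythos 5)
Your proof is correct and follows essentially the same route as the paper: reduce, via the preceding lemma, to showing $Def^+(A)$ is closed under complementation, and use model completeness of $Th(A)$ to produce arithmetic complements of arithmetic sets. The only difference is that you re-derive the Boolean-algebra property of $Ar_P(A)$ by hand (normal form as finite unions of $\exists_m A$ together with clause (2) of Theorem~\ref{CARPMC}), whereas the paper simply invokes the reformulation already stated in Theorem~\ref{CARPMC}.
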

  \begin{proof}
  If $P\in \AA_0$ and $X\in Def_P^+(A)$ is a positively definable variety, let $Q$, $Y$ and $b$ be as in the preceding lemma, so that $X=\{a\in A_P : a\otimes b\in Y\}$.
  By hypothesis $\t=Th(A)$ is model complete, so by Theorem~\ref{CARPMC} $Y$ has an arithmetic complement $Z=A_{P\otimes Q}-Y$. This means, again by the lemma, that
  $A_P- X= \{a\in A_P :a\otimes b\notin Y\}=\{a\in A_P:a\otimes b \in Z\}$ is positively definable, whence we may omit the complements in the definition
  of $Def(A)$, and $Def(A)=Def^+(A)$.
  \end{proof}

  In a real closed field $k$, the definable sets are called ``semi-algebraic'' (\cite{BCR}, 2) and a great part of real algebraic geometry deals with
  the study of those sets. Every semi-algebraic set is obtained by ``adding parameters to an arithmetic'' set, which was patent when we discussed the
  ``definability'' of the real order with a primitive morphism. The Euclidean topology on $k$ is defined by this order as in $\mb R$, and the
  Tarski-Seidenberg principe is an analogue of Chevalley's theorem which states that every semi-algebraic set in $k$ is a boolean combination of
  \emph{open} (or equivalently closed) definable sets. Similarly, in $p$-adically closed fields definable sets may be decomposed into boolean combinations
  of certain definable sets, among which we find the closed sets for the topology defined bu the $p$-adic valuation (see \cite{BE2}). We conjecture that
  those are exactly the closed sets for the $p$-adic topology. As these relations are defined in these geometrically closed rings by primitive morphisms,
  the general problem of structure for the definable sets of a given (noetherian, integral) geometrically closed ring $A$ would seem to be the
  identification of a topology $\ms T$ finer than Zariski, generated by an arithmetically definable basis and such that every definable set is a boolean
  combination of open (or closed) definable sets of $\ms T$.\\
  The concept of a g.c ring is indeed not restricted to the (algebraic analogues of the) fields $\mb C$ and $\wt{\mb F}_p$'s, $\mb R$ and $\mb Q_p$'s. For instance, the work of McKenna in~\cite{MK}
  mentions continuously many examples of model complete theories of fields in positive or zero characteristic, and simple model theoretic arguments show
  that every model complete theory of fields in the logical sense defines a model complete arithmetic category. The concepts expounded here are compatible with
  McKenna's results outside the scope of polynomial rings. 
  As definable sets are of intrisic interest in g.c rings, the other scope of application is model theory in an algebraic fashion. More
  fundamentally, as every finite quotient of $\Z$ is geometrically closed, it seems that a most natural question one should ask about an infinite geometrically
  closed ring is whether it is a field, as these are the only examples we have exhibited.

\end{document}